\documentclass[12pt,reqno]{amsart}


\usepackage{amsmath,amsfonts,amsthm,amssymb,amsxtra, mathrsfs
}
\usepackage{bbm} 
\usepackage{hyperref} 



\setlength{\voffset}{-.7truein}
\setlength{\textheight}{8.8truein}
\setlength{\textwidth}{6.05truein}
\setlength{\hoffset}{-.7truein}


\newtheorem{theorem}{Theorem}

\newtheorem{lemma}[theorem]{Lemma}
\newtheorem{corollary}[theorem]{Corollary}

\theoremstyle{definition}

\theoremstyle{remark}

\newtheorem{remark}[theorem]{Remark}




\newcommand\eps\varepsilon

\renewcommand\Re{\mathop{\mathrm{Re}}\nolimits}


\begin{document}

\title[Convex functionals on Bergman spaces]{Sharp stability of convex functionals on weighted Bergman spaces with applications}

\author{Petar Melentijevi\'{c}}
\address[Studentski trg 16, 11000 Beograd, Serbia]{Studentski trg 16, 11000 Beograd, Serbia}
\email{petar.melentijevic@matf.bg.ac.rs}
\keywords{ Bergman spaces, Hardy spaces, Fock spaces, Stability, Majorization theory, Wavelet transforms, Hyperbolic measure, Rearrangement inequalities}
\subjclass{30H20, 30H10, 42C40, 49K21}

\begin{abstract}
	Recently, Kulikov (\cite{Ku}) has shown that certain convex functionals on weighted Bergman spaces are maximized by reproducing kernels. We show a sharp quantitative stability of these estimates with the optimal norm and the exponent and an explicit constant asymptotically sharp in both directions ($\alpha\rightarrow -1$ and $\alpha\rightarrow +\infty$). Several applications of this result include recovering the appropriate result for Fock spaces, interpretation to Cauchy wavelets, and the Hardy space counterpart for functionals induced by increasing function. In addition, we prove a higher-dimensional analog of the main result assuming that all convex functionals on the weighted Bergman space $\mathcal{A}^2_{\alpha}(\mathbb{B}_n)$ attain their extrema in reproducing kernels.
\end{abstract}

\maketitle

\section{Introduction and main result}
We will be working with several spaces of holomorphic functions in the unit disk $\mathbb{D}=\{z\in\mathbb{C}: |z|<1\}.$ The weighted Bergman space $\mathcal{A}^{p}_{\alpha}(\mathbb{D}),$ with $\alpha>-1$ is defined as the space of all functions $f,$ holomorphic in $\mathbb{D}$ such that
$$\|f\|_{\mathcal{A}^{p}_{\alpha}}:=\frac{\alpha+1}{\pi}\int_{\mathbb{D}}|f(z)|^p(1-|z|^2)^{\alpha}dA(z)<+\infty,$$
where $dA(z):=dxdy$ is the two-dimensional Lebesgue measure. Detailed exposition on the topic of Bergman spaces is given in \cite{HeKoZhu}. The Hardy space $H^p$ consists of holomorphic functions $f$ such that 
$$\|f\|_{H^p}:=\lim_{r\rightarrow 1^{-}}\int_{0}^{2\pi}|f(re^{\imath t})|^p \frac{dt}{2\pi}$$
is finite.
It is well-known that 
$\lim_{\alpha\rightarrow -1^{+}}\|f\|_{\mathcal{A}^{p}_{\alpha}}=\|f\|_{H^p},$
which provides the possibility of translating some results from the weighted Bergman to the Hardy space setting (\cite{Zhu}). 
By
$$\mu(\Omega):=\int_{\Omega}\frac{dA(z)}{(1-|z|^2)^2}$$
we denote M\"obius invariant hyperbolic measure of the set $\Omega\subset\mathbb{D}.$ In our analysis, we will estimate the hyperbolic measure of the super-level sets $A_{t}:=\{z\in \mathbb{D}: u(z)>t\},$ where $u(z):=|f(z)|^p(1-|z|^2)^{\alpha+2}.$

Contractive inclusions between different weighted Bergman spaces have raised a great interest in recent investigations in function theory. Motivated by the Wehrl's entropy conjecture for the affine-linear group (\cite{LiSo3}) in mathematical physics,  Lieb and Solovej posed a question whether the norms $\|f\|_{\mathcal{A}^{p\alpha}_{\alpha}}$ are decreasing in $p.$ Earlier, Pavlovi\'{c} in \cite{Pa} had asked if Carleman inequality can be interpolated for non-integer values, while Brevig-Ortega-Cerda-Seip-Zhao in paper \cite{BrOrCeSeZh} give some evidence about Pavlovi\'{c}'s conjecture and broader perspective of its applications. It was recently solved by Kulikov in \cite{Ku} via an ingenious adaptation of a technique invented in \cite{NiTi} to the hyperbolic setting in the unit disk. In fact, he proved a more general result, which reads as follows:
Let $G:[0,+\infty)\rightarrow \mathbb{R}$ be a convex function. Then the maximum value of 
$$\int_{\mathbb{D}} G(|f(z)|^p(1-|z|^2)^{\alpha+2})d\mu(z)$$
is achieved for $f(z)=1,$ subject to the condition that $f \in \mathcal{A}^p_{\alpha}(\mathbb{D})$ and $\|f\|_{\mathcal{A}^p_{\alpha}}=1.$  For the sake of brevity we call these functionals $\mathbf{convex}.$  In the case of the functions of the Hardy space $H^p$ under the constraint $\|f\|_{H^p}=1$ the same conclusion is proved for every increasing function $G.$ Thus, applying this result to $G(t)=t^r,$ the contractive embeddings
$$\|f\|_{\mathcal{A}^{q}_{\beta}}\leqslant \|f\|_{\mathcal{A}^{p}_{\alpha}}\leqslant \|f\|_{H^r},$$
is easily implied for  $\frac{\beta+2}{q}=\frac{\alpha+2}{p}=r$ and $0<p<q.$
An interesting higher-dimensional extension of this result is given in \cite{Kal}. In \cite{Lin} and \cite{Mel} sharp contractive embeddings between Hardy and Dirichlet or Besov spaces and hypercontractive estimates for weighted Bergman spaces are proved via Kulikov's inequalities. Let us say that Wehrl entropy conjectures have been investigated in a sequence of papers (\cite{Li}, \cite{Lu}, \cite{LiSo1}, \cite{LiSo2}, \cite{Sc}, \cite{We}, \cite{Wo}) using different methods from those in \cite{NiTi} (and \cite{Fr}, \cite{RaTi}), which is, as far as the author knows, the first paper which uses the isoperimetric inequality in this topic.   

Ramos and Till in \cite{RaTi} gave an important application of the method of \cite{NiTi}  , where the sharp concentration inequality for $f \in \mathcal{A}^{2}_{\alpha}$ is proved. Namely, for a set $\Omega$ of prescribed hyperbolic measures $\mu(\Omega)=s,$ the quantity 
$$R(f,\Omega)=\frac{\frac{\alpha+1}{\pi}\int_{\Omega}|f(z)|^2(1-|z|^2)^{\alpha}dA(z)}{\|f\|^2_{\mathcal{A}^2_{\alpha}}}$$
satisfies the following sharp estimate
$$R(f,\Omega)\leqslant R(1,B_{r})=1-\big(1+\frac{s}{\pi}\big)^{-\alpha-1}.$$
Here, $B_{r}$ is a disk centered at the origin such that $\mu(B_r)=s.$ The equality is achieved if and only if $f$ is a multiple of some reproducing kernel $f_w(z)=(1-\overline{w}z)^{-\alpha-2}$ and $\Omega$ is a disk centered in $w$ of hyperbolic measure $s.$
This is further generalized in  hyperbolic higher dimension setting in \cite{KalRam}.\\

Very recently, stability of the local (concentration) estimates has become an active topic of research. In \cite{GGRT} the authors proved a sharp quantitative form of concentration inequalities in Fock spaces that gives sharp stability of the Faber-Krahn inequality for a short-time Fourier transform. For the foundations of time-frequency analysis, see \cite{grochenig}. The ideas of \cite{GGRT} are further employed in more technically demanding estimates for the weighted Bergman spaces, asymptotically sharp in both directions ($\alpha\rightarrow -1^{+}$ and $\alpha\rightarrow +\infty$). This result not only recovers the known estimate for the short-time Fourier transform but also gives a novel concentration estimate for a function from Hardy spaces. This was done in \cite{GKMR}. An improved bound for the Hilbert-Schmidt norm of localization operators in the time-frequency plane is the subject of \cite{NiRi}.

In \cite{FrNiTi} Frank, Nicola and Tilli proved a sharp quantitative version of the generalized Wehrl conjecture for the short-time Fourier transform. In particular,  
they provided a stability estimate for the logarithmic Sobolev inequality in Fock spaces, first proved in \cite{Ca}. This inequality is an important example of improved hypercontractivity, which is also considered in \cite{Ja}, \cite{Ja2} (partial solution to the hypercontractivity conjecture in \cite{Ja} is given in \cite{Mel}). The literature on many other instances of logarithmic Sobolev inequalities and their quantitative forms can be found in \cite{DoEsFiFrLo}. The approach in \cite{FrNiTi} uses majorization theory, local estimates, and their stability versions. The analogous problem in the case of spaces of holomorphic polynomials is addressed in \cite{GFOC}. These results motivate a similar question of considering convex functionals on weighted Bergman spaces, which will be our main task. 
Our first result reads as follows:
\begin{theorem}
Let $G:[0,1]\rightarrow\mathbb{R}$ be a continuous and convex non-linear function such that $G(0)=0.$ Then, for every $f\in\mathcal{A}^2_{\alpha}(\mathbb{D})$ there exists a constant $c_{G}$ depending on $G$ and $\alpha>-1$ for which 
$$\int_{\mathbb{D}} G(|f(z)|^2(1-|z|^2)^{\alpha+2})d\mu(z)\leqslant                                               \int_{\mathbb{D}} G((1-|z|^2)^{\alpha+2})d\mu(z)-c_{G}\min_{|c|=\|f\|_{\mathcal{A}^2_{\alpha}}}\frac{\|f-cf_w\|^2_{\mathcal{A}^2_{\alpha}}}{\|f\|^2_{\mathcal{A}^2_{\alpha}}}. $$
\end{theorem}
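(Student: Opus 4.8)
The plan is to reduce the convex-functional deficit to a quantitative (stable) form of the concentration inequality, and then to recover the case of a general convex $G$ from the extreme case $G(x)=(x-t)_+$ via the integral representation of convex functions. By homogeneity I normalise $\|f\|_{\mathcal{A}^2_\alpha}=1$; the reproducing-kernel identity $\langle f,f_w\rangle=f(w)(1-|w|^2)^{(\alpha+2)/2}$ gives $|\langle f,f_w\rangle|^2=u(w)$, whence the pointwise bound $u(z)=|f(z)|^2(1-|z|^2)^{\alpha+2}\le\|f\|^2_{\mathcal{A}^2_\alpha}=1$ (so that $G(u)$ is well defined) and, after minimising over the phase of $c$ and choosing $w$ where $u$ is largest,
$$\min_{|c|=1}\|f-cf_w\|^2_{\mathcal{A}^2_\alpha}=2\bigl(1-|\langle f,f_w\rangle|\bigr)=2\bigl(1-\sqrt{\|u\|_\infty}\bigr)=:\delta^2 .$$
Since $\|u\|_\infty\le1$, one has $1-\|u\|_\infty\asymp\delta^2$, so it suffices to bound the left-hand deficit below by a positive multiple of $1-\|u\|_\infty$.

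Next I would linearise the functional. A continuous convex $G$ on $[0,1]$ with $G(0)=0$ admits the representation $G(x)=G'(0^+)\,x+\int_{(0,1)}(x-t)_+\,d\nu(t)$, where $\nu=G''\ge0$ is a nonnegative measure that is nonzero precisely because $G$ is non-linear. As $\int_{\mathbb{D}}u\,d\mu=\int_{\mathbb{D}}u^*\,d\mu$ (with $u^*(z)=(1-|z|^2)^{\alpha+2}$) for every unit-norm $f$, the linear term contributes equally to both integrals and cancels. Therefore the deficit splits as
$$\int_{\mathbb{D}}G(u^*)\,d\mu-\int_{\mathbb{D}}G(u)\,d\mu=\int_{(0,1)}D(t)\,d\nu(t),\qquad D(t):=\int_{\mathbb{D}}(u^*-t)_+\,d\mu-\int_{\mathbb{D}}(u-t)_+\,d\mu .$$
By the layer-cake formula $D(t)=\int_t^1\bigl[\mu(A_s^*)-\mu(A_s)\bigr]\,ds$, where $A_s=\{u>s\}$ and $A_s^*=\{u^*>s\}$; that $D(t)\ge0$ is exactly Kulikov's inequality applied to the convex function $(x-t)_+$.

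The heart of the matter is thus the \emph{quantitative} concentration estimate
$$D(t)\ \ge\ \psi(t)\,\bigl(1-\|u\|_\infty\bigr)\qquad(0<t<1),$$
with $\psi(t)>0$: this is precisely the sharp stability of the Ramos--Tilli/Kulikov inequality for the single test function $(x-t)_+$, and it is the step I expect to be the main obstacle. I would obtain it by revisiting the sharp proof of the concentration inequality — the fine analysis of the super-level sets $A_s$ via the coarea formula, as in the proofs of Kulikov and Ramos--Tilli — and extracting from it a genuine defect term; a stability analysis of its equality case, which is attained only at the reproducing kernels, then converts that defect into the distance $1-\|u\|_\infty$ of $f$ from the kernels, along the lines of \cite{GKMR}. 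That the exponent here is optimal can be seen from the perturbation $f=(1+\varepsilon z^2)/\|1+\varepsilon z^2\|_{\mathcal{A}^2_\alpha}$: since $z^2$ averages to zero on every circle $|z|=r$, the first-order change of the decreasing rearrangement $u^\mu$ vanishes, so that $D(t)\asymp\varepsilon^2$ and $\delta^2\asymp\varepsilon^2$ simultaneously.

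Granting the pointwise-in-$t$ estimate, the conclusion is immediate: integrating against $\nu$,
$$\int_{\mathbb{D}}G(u^*)\,d\mu-\int_{\mathbb{D}}G(u)\,d\mu=\int_{(0,1)}D(t)\,d\nu(t)\ \ge\ \Bigl(\int_{(0,1)}\psi(t)\,d\nu(t)\Bigr)\bigl(1-\|u\|_\infty\bigr)\ \ge\ c_G\,\delta^2 ,$$
where $c_G>0$ because $\psi>0$ on $(0,1)$ while $\nu((0,1))>0$ by non-linearity of $G$; the constant depends only on $G$ and $\alpha$, as required.
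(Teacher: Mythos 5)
Your overall architecture coincides with the paper's: same normalization, same layer--cake representation $G(x)=G'(0^+)x+\int_{(0,1)}(x-t)_+\,d\nu(t)$ with cancellation of the linear term (both sides integrate to $\pi/(\alpha+1)$), same reduction to a pointwise-in-$t$ bound $D(t)\geqslant\psi(t)(1-T)$ for the test functions $(x-t)_+$, and same conversion via $1-T\leqslant 2(1-\sqrt{T})\leqslant 2(1-T)$. But the one step you defer --- and yourself flag as the main obstacle --- is exactly where all of the paper's work lies, and your sketch of it misses the actual difficulty. The stability input, inequality \eqref{prva} from \cite{GKMR}, reads $2(1-\sqrt{T})\leqslant M_\alpha(s)\,\delta(f;\Omega,\alpha)$, where the constant $M_\alpha(s)$ grows like $(1+s/\pi)^{\alpha+1}$ in $s=\mu(\Omega)$. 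Applying it with $\Omega=A_t$ produces a defect carrying the factor $M_\alpha(\mu(A_t))^{-1}$, and $\mu(A_t)$ depends on $f$, not on $t$ alone: there is no ready-made statement ``$D(t)\geqslant\psi(t)(1-T)$ with $\psi(t)>0$'' uniform over all unit-norm $f$ with $\max u=T$, so writing $\psi(t)$ as if it came for free from the equality-case analysis begs the question. The paper decouples the constant from $f$ by a two-case argument: when $(1+s/\pi)^{\alpha+1}\leqslant 2t^{-\frac{\alpha+1}{\alpha+2}}$ it replaces $M_\alpha(s)$ by $2M_\alpha(\mu(B_t))$; when $\mu(A_t)$ is much larger than $\mu(B_t)$ it abandons \eqref{prva} altogether and instead exploits a defect intrinsic to the rearrangement bound itself, namely the explicit identity \eqref{JEDN}, $\int_{A_t^*}(v_\alpha-t)\,d\mu=\int_{\mathbb{D}}(v_\alpha-t)_+\,d\mu-A(s)$, together with convexity of $A$ in $s$, which yields $A(s)\geqslant\frac{\pi t^{\frac{\alpha+1}{\alpha+2}}}{\alpha+1}\bigl((\alpha+1)2^{\frac{1}{\alpha+1}}-\alpha-\tfrac32\bigr)$ on the relevant range. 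This second mechanism --- a deficit arising purely because $A_t^*$ overshoots $B_t$, with no stability theorem invoked --- is absent from your plan.

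The gap is fillable at the qualitative level of the theorem as stated: Chebyshev applied to $\int_{\mathbb{D}}u\,d\mu=\pi/(\alpha+1)$ gives $\mu(A_t)\leqslant\pi/((\alpha+1)t)$, hence $M_\alpha(\mu(A_t))\leqslant M_\alpha\bigl(\pi/((\alpha+1)t)\bigr)$ and, following the paper's chain of inequalities, $D(t)\geqslant (1-T)\,K_\alpha(t)\,M_\alpha\bigl(\pi/((\alpha+1)t)\bigr)^{-1}=:\psi(t)(1-T)$ with $\psi>0$ on $(0,1)$, whence $c_G>0$ (taking, if $\nu$ is an infinite measure, the integral of $\psi$ over a compact subinterval charged by $\nu$). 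But this crude patch gives $\psi(t)\asymp t^{\alpha+1}$ as $t\to 0$, far worse than the effective weight $\asymp t^{\frac{\alpha+1}{\alpha+2}}$ the paper extracts from $M_\alpha(\mu(B_t))$, so you would lose the explicit, near-optimal constant that is the point of the quantitative statement. Two smaller omissions: your representation of $G$ presupposes $G'(0^+)>-\infty$, which the hypotheses do not guarantee --- the paper truncates with $G_\varepsilon(u)=\max\{G(u),-u/\varepsilon\}$ and passes to the monotone limit --- and your identity $\min_{|c|=1}\|f-cf_w\|^2=2(1-\sqrt{T})$ requires the kernel to be normalized and $w$ to be taken at the maximum of $u$, which is attained in $\mathbb{D}$ since $u(z)=|\langle f,\kappa_z\rangle|^2\to 0$ as $|z|\to 1$; this matches the paper's usage of \eqref{prva} but should be said.
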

In particular, taking $G(t)=t^{\frac{p}{2}}, p>2$ we obtain the following corollary:
\begin{corollary}
There exists a computable constant $c_{p,\alpha}$ for $p>2$ and $\alpha>-1$ such that
\begin{align*}
\|f\|^p_{\mathcal{A}^p_{\beta}}
&\leqslant \|f\|^p_{\mathcal{A}^2_{\alpha}}\bigg(1-c_{p,\alpha}\min_{|c|=\|f\|_{\mathcal{A}^2_{\alpha}}}\frac{\|f-cf_w\|_{\mathcal{A}^2_{\alpha}}^2}{\|f\|_{\mathcal{A}^2_{\alpha}}^2}\bigg)^{\frac{p}{2}}\\
&\leqslant \|f\|^p_{\mathcal{A}^2_{\alpha}}-c_{p,\alpha}^\frac{p}{2}\min_{|c|=\|f\|_{\mathcal{A}^2_{\alpha}}}\frac{\|f-cf_w\|_{\mathcal{A}^2_{\alpha}}^p}{\|f\|_{\mathcal{A}^2_{\alpha}}^p},
\end{align*}
where $\beta+2=\frac{p(\alpha+2)}{2}.$
\end{corollary}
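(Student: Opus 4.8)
The plan is to apply Theorem~1 with the choice $G(t)=t^{p/2}$ and then convert the resulting \emph{linear} deficit into the $\tfrac{p}{2}$-power deficit asserted by the corollary by means of two elementary inequalities. First note that $G(t)=t^{p/2}$ is continuous and convex on $[0,1]$, is non-linear precisely because $p>2$, and satisfies $G(0)=0$, so Theorem~1 applies. By homogeneity of both displayed inequalities I may assume $\|f\|_{\mathcal{A}^2_\alpha}=1$; write $\delta:=\min_{|c|=1}\|f-cf_w\|^2_{\mathcal{A}^2_\alpha}$ for the deficit appearing there. Using $d\mu(z)=dA(z)/(1-|z|^2)^2$ and the relation $\beta+2=\tfrac{p(\alpha+2)}{2}$, a direct computation gives
\[
\int_{\mathbb{D}}G\big(|f(z)|^2(1-|z|^2)^{\alpha+2}\big)\,d\mu(z)=\int_{\mathbb{D}}|f(z)|^p(1-|z|^2)^{\beta}\,dA(z)=\frac{\pi}{\beta+1}\,\|f\|^p_{\mathcal{A}^p_\beta},
\]
and likewise $\int_{\mathbb{D}}G((1-|z|^2)^{\alpha+2})\,d\mu(z)=\tfrac{\pi}{\beta+1}\|1\|^p_{\mathcal{A}^p_\beta}=\tfrac{\pi}{\beta+1}$, since $\|1\|_{\mathcal{A}^p_\beta}=1$. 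Substituting these into Theorem~1 and multiplying through by $\tfrac{\beta+1}{\pi}$ yields the linear bound $\|f\|^p_{\mathcal{A}^p_\beta}\le 1-c\,\delta$, where $c:=\tfrac{\beta+1}{\pi}c_G$.

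The key step is to upgrade this to the power form. Set $c_{p,\alpha}:=\tfrac{2}{p}\,c$. Since the left-hand side above is nonnegative we have $c\delta\in[0,1]$, so Bernoulli's inequality $(1-y)^s\le 1-sy$ with exponent $s=\tfrac{2}{p}\in(0,1)$ and $y=c\delta$ gives $(1-c\delta)^{2/p}\le 1-c_{p,\alpha}\delta$. Raising the linear bound to the power $\tfrac{2}{p}$, applying this inequality, and then raising to the power $\tfrac{p}{2}$, I obtain $\|f\|^p_{\mathcal{A}^p_\beta}\le(1-c_{p,\alpha}\delta)^{p/2}$ (note $c_{p,\alpha}\delta\le\tfrac{2}{p}<1$, so the bracket is nonnegative). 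Restoring the normalization by homogeneity reinstates the factor $\|f\|^p_{\mathcal{A}^2_\alpha}$ and produces exactly the first inequality of the corollary.

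For the second inequality I use superadditivity of $t\mapsto t^{p/2}$: for $a=\tfrac{p}{2}\ge 1$ and $x\in[0,1]$ one has $x^a\le x$ and $(1-x)^a\le 1-x$, hence $x^a+(1-x)^a\le 1$, i.e. $(1-x)^{p/2}\le 1-x^{p/2}$. Applying this with $x=c_{p,\alpha}\delta$, and using that $\delta^{p/2}=\big(\min_{|c|=1}\|f-cf_w\|^2_{\mathcal{A}^2_\alpha}\big)^{p/2}=\min_{|c|=1}\|f-cf_w\|^p_{\mathcal{A}^2_\alpha}$ (the minimum is preserved by the increasing map $t\mapsto t^{p/2}$), turns the right-hand side of the first inequality into $1-c_{p,\alpha}^{p/2}\delta^{p/2}$, which is the second displayed bound. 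The constant $c_{p,\alpha}=\tfrac{2(\beta+1)}{p\pi}c_G$ is explicit because Theorem~1 supplies an explicit $c_G$.

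The only genuinely non-routine point is the passage from the linear deficit delivered by Theorem~1 to the $\tfrac{p}{2}$-power deficit; this is exactly where Bernoulli's inequality enters and where the constant must be shrunk by the factor $\tfrac{2}{p}$. Everything else — identifying the two integrals with $\mathcal{A}^p_\beta$-norms, checking $\|1\|_{\mathcal{A}^p_\beta}=1$, and the superadditivity estimate for the last step — is bookkeeping. The one point to handle with care is homogeneity in the final line: as written its deficit term is scale-invariant while the leading term has degree $p$, so that display is to be read under the normalization $\|f\|_{\mathcal{A}^2_\alpha}=1$ (equivalently, the deficit carries a factor $\|f\|^p_{\mathcal{A}^2_\alpha}$ in general).
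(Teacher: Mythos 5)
Your proposal is correct and follows the route the paper intends: the paper states the corollary as an immediate consequence of Theorem~1 with $G(t)=t^{p/2}$, and your argument simply fills in the elementary steps it leaves implicit (identifying $\int_{\mathbb{D}}G(u)\,d\mu$ with $\tfrac{\pi}{\beta+1}\|f\|^p_{\mathcal{A}^p_\beta}$, Bernoulli's inequality to pass from the linear deficit $1-c\delta$ to $(1-\tfrac{2c}{p}\delta)^{p/2}$, and the superadditivity bound $(1-x)^{p/2}\leqslant 1-x^{p/2}$ for the second display). Your observation that the final display is homogeneous only when read under the normalization $\|f\|_{\mathcal{A}^2_\alpha}=1$ (or with the deficit carrying a factor $\|f\|^p_{\mathcal{A}^2_\alpha}$) is a correct and worthwhile clarification of the statement as printed.
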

We can prove Theorem 1 using a more direct approach which works in a slightly general situation. Namely, we have the following theorem :
\begin{theorem}
Let $A$ be a positive operator with the unit trace norm and $G$ be a continuous convex, non-linear function. Then, for some computable absolute constant $c_A>0,$ we have
$$\int_{\mathbb{D}} G(\langle \kappa_z, A\kappa_z\rangle)d\mu(z)\leqslant                                               \int_{\mathbb{D}} G((1-|z|^2)^{\alpha+2})d\mu(z)-c_{A}\inf_{\xi\in \mathbb{D}}\|A-\langle \cdot, \kappa_{\xi}\rangle \kappa_{\xi}\|_{\mathcal{S}_1},$$
where $\mathcal{S}_1$ denotes the trace norm and $\kappa_{\xi}$ normalized Bergman reproducing kernel in $\mathcal{A}^2_{\alpha}(\mathbb{D}
).$
\end{theorem}
We turn now our attention to higher dimensional weighted Bergman spaces. Let $\mathcal{A}^p_{\alpha}(\mathbb{B}_n)$ be the space of the holomorphic functions in the unit ball $\mathbb{B}_n$ of $\mathbb{C}^n$ such that
$$
\|f\|_{\mathcal{A}^p_{\alpha}(\mathbb{B}_n)}:=
\bigg(\frac{\Gamma(\alpha+n+1
)}{n!\Gamma(\alpha+1)}\int_{\mathbb{B}_n}|f(z)|^p(1-|z|^2)^{\alpha+n+1}d\mu_n(z)\bigg)^{\frac{1}{p}}<+\infty,$$
where $\alpha>-1,$ $d\mu_n(z)=\frac{dV(z)}{(1-|z|^2)^{n+1}}$ and $dV(z)$ is the usual Lebesgue volume measure. In \cite{NiRiTi} it is proved that if
$$\int_{\mathbb{B}_n} G(|f(z)|^p(1-|z|^2)^{\alpha+n+1})d\mu_n(z)\leqslant                                               \int_{\mathbb{B}_n} G((1-|z|^2)^{\alpha+n+1})d\mu_n(z)$$
is true for $\mathbf{all}$ convex functions $G$, then the extremizers are exactly multiples of \\
$(1-\langle z, w\rangle)^{-\frac{2(\alpha+n+1)}{p}}$ and the quantities $\int_{\mathbb{B}_n} G(|f(z)|^p(1-|z|^2)^{\alpha+n+1})d\mu_n(z)$ and\\
$\int_{\mathbb{B}_n} G((1-|z|^2)^{\alpha+n+1})d\mu_n(z)
$ are close only if $f$ is close to the set of extremizers. Here, we will get the optimal stability result:
\begin{theorem}
Let
$$\int_{\mathbb{B}_n} G(|f(z)|^2(1-|z|^2)^{\alpha+n+1})d\mu_n(z)\leqslant                                               \int_{\mathbb{B}_n} G((1-|z|^2)^{\alpha+n+1})d\mu_n(z)$$
holds for every convex function $G$ and $f\in \mathcal{A}^2_{\alpha}(\mathbb{B}_n).$ Then we have
\begin{align*}
&\int_{\mathbb{B}_n} G(|f(z)|^2(1-|z|^2)^{\alpha+n+1})d\mu_n(z)\\
\leqslant
&\int_{\mathbb{B}_n} G((1-|z|^2)^{\alpha+n+1})d\mu_n(z)-c_{G}\min_{|c|=\|f\|_{\mathcal{A}^2_{\alpha}(\mathbb{B}_n)}}\frac{\|f-cf_w\|^2_{\mathcal{A}^2_{\alpha}(\mathbb{B}_n)}}{\|f\|^2_{\mathcal{A}^2_{\alpha}(\mathbb{B}_n)}}  
\end{align*}
for some absolute constant $c_G$ depending only on the function $G$ and $f_w(z)=(1-\langle z, w\rangle)^{-\alpha-n-1}.$ 
\end{theorem}

In the case of Hardy spaces, we consider the functionals induced by increasing functions, and the result takes the following form: 
\begin{theorem}
Let $G:[0,1]\rightarrow\mathbb{R}$ be a continuous and strictly increasing or convex and increasing function such that $G(0)=0.$ Then, for every $f\in H^2,$ there exists a constant $c_{G}$ depending on $G$ for which 
$$\int_{\mathbb{D}} G(|f(z)|^2(1-|z|^2))d\mu(z)\leqslant                                               \int_{\mathbb{D}} G((1-|z|^2))d\mu(z)-c_{G}(1-T), $$
where $T\leqslant 1$ is the constant such that
$$|f(z)|^2(1-|z|^2)\leqslant T\|f\|^2_{H^2}.$$
\end{theorem}
The paper is organized as follows. The second section describes the fundamental relations between the main notions in the context of the functionals on the weighted Bergman spaces that are induced by convex functions. The proof of stability through majorization theory and stability estimates of local inequalities with applications to Fock and Hardy spaces (using the technique from \cite{FrNiTi}) are given in the third section. Two proofs of the general result for the positive operator with finite trace norm using the methods from \cite{FrNiTi} and \cite{NiRiTi} are the main content of the fourth section. In the same section, we give the wavelet transform interpretation. The fifth section is devoted to the proof of the stability of the convex functionals on $\mathcal{A}^p_{\alpha}(\mathbb{B}_n)$ assuming that the analog of Kulikov's inequality in a higher-dimensional setting holds for every convex function. In the last, sixth section we prove the result for Hardy space setting and functionals given by strictly increasing functions. 

\section{Convex functionals on weighted Bergman spaces}

Consider $f\in \mathcal{A}^p_{\alpha}(\mathbb{D})$ with the norm $1$. Denote
\begin{equation*}
u(z)=|f(z)|^p(1-|z|^2)^{\alpha+2}.
\end{equation*}
Define $\rho(t):=\mu(\{z\in\mathbb{D}: u(z)>t\})$ and $\rho_0(t):=\mu(\{z\in\mathbb{D}: v_{\alpha}(z)>t\}).$
For an arbitrary measurable $\Omega \subset \mathbb{D},$ 
 the following Ramos-Tilli localization inequality holds for all $p>0$ (as proved in \cite{RaTi} or \cite{KuNiOCTi})
$$\int_{\Omega} u(z)d\mu(z)\leqslant \int_{\Omega^*} v_\alpha(z)d\mu(z),$$
where $\Omega^*$ is the disk centered in $0$ such that $\mu(\Omega^*)=\mu(\Omega).$

  In \cite{Fr} the author observed that the concentration estimate and the inequality for convex functionals are equivalent. This is a consequence of an integral version of Karamata's majorization inequality (which can be found in \cite{HaLiPo},\cite{Si3} or, in a more convenient form for our setting in \cite{AlTrLi}). This will be, along with the quantitative version of the concentration estimate, the crucial idea in proving our theorem. Therefore, we will first recover Kulikov's result from the Ramos-Tilli local estimate following the approach from \cite{FrNiTi}. Taking an arbitrary $t>0$ and the super-level set $A_t:=\{u(z)>t\}$ for $\Omega$ in Ramos-Tilli inequality, we have:
\begin{align*}
&\int_{\mathbb{D}}(u(z)-t)_+\:\! d\mu(z)=\int_{A_t} (u(z)-t) d\mu(z)\\
=&\int_{A_t} u(z)d\mu(z)-t\mu(A_t)\leqslant \int_{A_t^*} v_\alpha(z)d\mu(z)-t \mu(A_t^*),
\end{align*}
where $A_t^{*}$ stands for the disk with center in $0$ and hyperbolic measure equal to $\mu(A_t).$
Further,
\begin{multline}
\label{mainineq}
 \int_{A_t^*} v_\alpha(z)d\mu(z)-t\mu(A_t^*) = \int_{A_t^*} (v_\alpha(z)-t)d\mu(z)\\
 \leqslant \int_{A_t^*} (v_\alpha(z)-t)_+\:\! d\mu(z)\leqslant\int_{\mathbb{D}}(v_\alpha(z)-t)_+\:\! d\mu(z).
\end{multline}
By using
\begin{equation*}
G(u)=G'(0)u+\int_0^{+\infty} (u-t)_+ G''(t)dt
\end{equation*}
for a convex $G$ such that $G(0)=0$ and $G'(0)>-\infty,$ we get
\begin{multline*}
\int_{\mathbb{D}}G(u(z))d\mu(z)=\int_{\mathbb{D}} u(z)d\mu(z)\cdot G'(0)
+\int_0^{+\infty}\! \biggl(\int_{\mathbb{D}}(u(z)-t)_+\:\! d\mu(z)\!\biggr)G''(t)dt\\
\leqslant \int_{\mathbb{D}}v_\alpha(z)d\mu(z)\cdot G'(0)
+\int_0^{+\infty}\!\biggl(\int_{\mathbb{D}}(v_\alpha(z)-t)_+\:\! d\mu(z)\!\biggr)G''(t)dt
=\int_{\mathbb{D}}G(v_\alpha(z))d\mu(z).
\end{multline*}
Recall that a convex function has the left and right derivatives at every point, and the second derivative $G''(t)dt$ is a measure that does not have to be absolutely continuous. 
If $G'(0)=-\infty,$ then, by the above, we have 
\begin{equation*}
  \int_{\mathbb{D}}G_\varepsilon(u(z))d\mu(z)
  \leqslant \int_{\mathbb{D}}G_\varepsilon(v_\alpha(z))d\mu(z)
\end{equation*}
for $G_\varepsilon(u)=\max\{G(u),-\frac{u}{\varepsilon}\},$ which has a bounded derivative almost everywhere. Since this family converges monotonically to $G$ in the segment $[0,T],$ as $\varepsilon\to 0+,$  we conclude the proof in this case.  \\

In order to achieve a refinement of the inequality \eqref{mainineq}, we will determine the difference of the integrals of $v_{\alpha}(z)-t$ and its positive part for fixed $t>0$ in the appropriate sets. Namely, we start from the equation:
\begin{multline}
\label{jednakost}
  \int_{A_t^*}(v_\alpha(z)-t)d\mu(z)\\
  =\int_{\mathbb{D}}(v_\alpha(z)-t)_+\:\! d\mu(z)
  - \int_{B_t \setminus A_t^*} (v_\alpha(z)-t)_+\:\! d\mu(z)
  - \int_{A_t^*\setminus B_t} (v_\alpha(z)-t)_-\:\! d\mu(z),
\end{multline}
where $B_t$ stands for the disk $\{z\in\mathbb{D}: v_{\alpha}(z)>t\}.$ Both $A_t^*$ and $B_t$ are disks centered in $0;$ therefore, if
$\mu(A_t^*)>\mu(B_t)$ then
$B_t\setminus A_t^*=\emptyset,$ otherwise ($\mu(A_t^*)\leqslant\mu(B_t)$) $A_t^*\setminus B_t =\emptyset.$ In the first case, we get:
\begin{equation*}
  \int_{A_t^*}(v_\alpha(z)-t)d\mu(z)=
  \int_{\mathbb{D}}(v_\alpha(z)-t)_+ \:\! d\mu(z)
  - \int_{A_t^*\setminus B_t} (v_\alpha(z)-t)_- \:\! d\mu(z).
\end{equation*}
Now, we will calculate the second integral on the right hand side. Introducing polar coordinates, we have
\begin{equation*}
\int_{A_t^*\setminus B_t} (v_\alpha(z)-t)_- \:\!d\mu(z)
=2\pi\int_{r(B_t)}^{r_t} \Bigl(t-(1-\rho^2)^{\alpha+2}\Bigr)\frac{\rho d\rho}{(1-\rho^2)^2}.
\end{equation*}
While
\begin{equation*}
  r^2(B_t)=1-t^{\!\frac1{\alpha+2}},
\end{equation*}
with $r_t$ we denote the radius of $A_t^{*}$, i.e. we have:
\begin{equation*}
  \mu(A_t^*)=\mu(A_t)=\pi\Bigl(\frac1{1-r_t^2}-1\Bigr).
\end{equation*}
We find:
\begin{align*}
&\pi\int_{r^2(B_t)}^{r^2_t}\Bigl(t-(1-y)^{\alpha+2}\Bigr)\frac{dy}{(1-y)^2}\\
&=\pi\Bigl(\frac{t}{1-y}+\frac{(1-y)^{\alpha+1}}{\alpha+1}\Bigr)\Big|_{y=r^2(B_t)}^{r^2_t}\\
&=\pi t\Bigl(\frac1{1-r^2_t}-\frac1{1-r^2(B_t)}\Bigr)+\frac{\pi}{\alpha+1}\bigg((1-r^2_t)^{\alpha+1}
-(1-r^2(B_t))^{\alpha+1}\bigg)\\
&=\pi t\Bigl(\frac{\mu(A_t)}{\pi}+1-t^{-\frac1{\alpha+2}}\Bigr)
+\frac{\pi}{\alpha+1}\bigg(\Bigl(1+\frac{\mu(A_t)}{\pi}\Bigr)^{\!\!-\alpha-1}-t^{\frac{\alpha+1}{\alpha+2}}\bigg)\\
&=t(\mu(A_t)+\pi)+\frac{\pi}{\alpha+1}\bigg[\Bigl(1+\frac{\mu(A_t)}{\pi}\Bigr)^{\!\!-\alpha-1}
-(\alpha+2)t^{\frac{\alpha+1}{\alpha+2}}\bigg],
\end{align*}
from which
$$
\int_{A_t^*\setminus B_t\!\!} (v_\alpha(z)-t)_- \:\!d\mu(z)
=t(\mu(A_t)+\pi)+\frac{\pi}{\alpha+1}\bigg[\Bigl(1+\frac{\mu(A_t)}{\pi}\Bigr)^{\!\!-\alpha-1}
-(\alpha+2)t^{\frac{\alpha+1}{\alpha+2}}\bigg].
$$

Hence, in the case $\mu(A_t)>\mu(B_t),$ we get:
\begin{align*}
  &\int_{A_t^*}(v_\alpha(z)-t)d\mu(z)\\
  =&\int_{\mathbb{D}}(v_\alpha(z)-t)_{+}d\mu(z)-\bigg[t(\pi+s)+\frac{\pi}{\alpha+1}\bigg(\big(1+\frac{s}{\pi}\big)^{-\alpha-1}-(\alpha+2)t^{\frac{\alpha+1}{\alpha+2}}\bigg)\bigg],
\end{align*}
where $s=\mu(A_t).$  We proceed quite analogously for $\mu(A_t)\leqslant\mu(B_t)$ and the only difference is the sign of the last summand in the previous identity. More precisely, the result of this calculation is given by the following lemma:
\begin{lemma}
\begin{multline}
\label{JEDN}
  \int_{A_t^*}(v_\alpha(z)-t)d\mu(z)\\
  =\int_{\mathbb{D}}(v_\alpha(z)-t)_{+}d\mu(z)-\bigg|t(\pi+s)+\frac{\pi}{\alpha+1}\bigg(\big(1+\frac{s}{\pi}\big)^{-\alpha-1}-(\alpha+2)t^{\frac{\alpha+1}{\alpha+2}}\bigg)\bigg|.
\end{multline}
\end{lemma}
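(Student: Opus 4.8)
The plan is to prove the identity \eqref{JEDN} by evaluating explicitly the two ``defect'' integrals appearing in the decomposition \eqref{jednakost}. The key structural observation is that both $A_t^*$ and $B_t$ are disks centered at the origin, hence nested: if $\mu(A_t^*)>\mu(B_t)$ then $B_t\setminus A_t^*=\emptyset$, while if $\mu(A_t^*)\leqslant\mu(B_t)$ then $A_t^*\setminus B_t=\emptyset$. Thus exactly one of the two correction integrals in \eqref{jednakost} survives, and the proof splits into two cases according to the sign of $\mu(A_t)-\mu(B_t)$.

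In each case I would pass to polar coordinates and substitute $y=\rho^2$, reducing the surviving defect integral to a one-dimensional integral of $\bigl(t-(1-y)^{\alpha+2}\bigr)(1-y)^{-2}$ (up to an overall sign), whose antiderivative is $F(y)=\dfrac{t}{1-y}+\dfrac{(1-y)^{\alpha+1}}{\alpha+1}$. The two endpoints are fixed by the radii: $r^2(B_t)=1-t^{1/(\alpha+2)}$ comes from the definition of $B_t$, and $r_t$ from $\mu(A_t^*)=\pi\bigl(\tfrac1{1-r_t^2}-1\bigr)=s$, so that $1-r_t^2=(1+s/\pi)^{-1}$. Substituting these values of $1-y$ into $F$ and collecting terms yields precisely the bracketed expression
$$P(s,t)=t(\pi+s)+\frac{\pi}{\alpha+1}\Bigl(\bigl(1+\tfrac{s}{\pi}\bigr)^{-\alpha-1}-(\alpha+2)t^{\frac{\alpha+1}{\alpha+2}}\Bigr),$$
which is exactly the Case~1 computation already carried out above.

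To merge the two regimes into the single absolute value in \eqref{JEDN}, I would observe that $F'(y)=\bigl(t-(1-y)^{\alpha+2}\bigr)(1-y)^{-2}$ vanishes and changes sign exactly at $y=r^2(B_t)$, so $F$ attains its global minimum there. Consequently $F(r_t^2)\geqslant F(r^2(B_t))$ for every admissible $r_t$, and the surviving defect integral (a positive- or negative-part integral, hence a priori nonnegative) equals $P(s,t)=|P(s,t)|\geqslant 0$ in both cases, with equality precisely when $r_t=r(B_t)$, i.e. $\mu(A_t)=\mu(B_t)$. Substituting this back into \eqref{jednakost} gives \eqref{JEDN}.

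The main obstacle is bookkeeping rather than analysis: one must track the orientation of the limits of integration and the sign of the integrand separately in the two cases, so that the positive-part integral over $B_t\setminus A_t^*$ and the negative-part integral over $A_t^*\setminus B_t$ are shown to yield the \emph{same} nonnegative number $|P(s,t)|$. Once the endpoints are correctly identified and the minimality of $F$ at $r^2(B_t)$ is noted, the antiderivative evaluation itself is routine.
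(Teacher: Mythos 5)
Your proposal is correct and follows essentially the same route as the paper: the decomposition \eqref{jednakost}, the nested-disk case analysis, and the polar-coordinate evaluation of the surviving defect integral via the antiderivative $F(y)=\tfrac{t}{1-y}+\tfrac{(1-y)^{\alpha+1}}{\alpha+1}$ with endpoints $r^2(B_t)=1-t^{1/(\alpha+2)}$ and $1-r_t^2=\bigl(1+\tfrac{s}{\pi}\bigr)^{-1}$ are exactly the steps the paper carries out. Your endgame is in fact slightly cleaner than the paper's: where the paper asserts that the case $\mu(A_t)\leqslant\mu(B_t)$ is analogous ``with the sign of the last summand changed'' and absorbs this into the absolute value, you observe that in both cases the defect integral equals the same quantity $\pi\bigl(F(r_t^2)-F(r^2(B_t))\bigr)=P(s,t)$, nonnegative because $F'$ vanishes and changes sign exactly at $y=r^2(B_t)$, which shows the absolute value in \eqref{JEDN} is actually redundant --- a conclusion consistent with the paper's own later observation that $A(s)$ is convex in $s$ with minimum value $0$ at $s_0$ satisfying $\bigl(1+\tfrac{s_0}{\pi}\bigr)^{-\alpha-2}=t$.
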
 
Note that from differential inequality $\frac{-\rho'(t)}{\pi+\rho(t)}\geqslant \frac{1}{(\alpha+2)t},$ integrating from $t$ to $T$ we get $(1+\frac{\rho(t)}{\pi})^{-\alpha-2}\leqslant \frac{t}{T}.$ Therefore, $A(s,t)$ is positive for $\frac{t}{T}>(1+\frac{s}{\pi})^{-\alpha-2}>t$ and negative for $(1+\frac{s}{\pi})^{-\alpha-2}<t.$

\section{Optimal stability for $\mathcal{A}^2_{\alpha}$- proof via concentration estimates and majorization theory} 
In this section, we concentrate on the case $p=2.$ The pointwise inequality
$$u(z)\leqslant \|f\|_{\mathcal{A}^2_{\alpha}}=1$$
is known to be true for every $f\in\mathcal{A}^2_{\alpha}.$
The equality is attained if and only if $f$ is equal to the multiple of the reproducing kernel $f_w(z)=(1-\overline{w}z)^{-\alpha-2}.$ Let us suppose that $\max_{\mathbb{D}}u(z)=T<1.$
In \cite{GKMR} it is proved that, for $f\in \mathcal{A}_\alpha^2$ there holds the estimate
\begin{equation}\label{prva}
  1-T\leqslant 2(1-\sqrt{T})=\min_{|c|=\|f\|_{\mathcal{A}^2_{\alpha}}}\frac{\|f-cf_w\|^2_{\mathcal{A}^2_{\alpha}}}{\|f\|^2_{\mathcal{A}^2_{\alpha}}}\leqslant M_\alpha(s)\delta(f;\Omega,\alpha)
\end{equation}
where 
$$M_\alpha(s)=C\bigg(1+\frac{\alpha+2}{\alpha+1}\bigg[\big(1+\frac{s}{\pi}\big)^{\alpha+1}-1\bigg]\bigg),$$
$$\delta(f;\Omega,\alpha)=1-\frac{\int_\Omega|f(z)|^2(1-|z|^2)^{\alpha+2}d\mu(z)}{\int_{\Omega^*} v_{\alpha}d\mu(z)}$$
and
$$v_\alpha(z)=(1-|z|^2)^{\alpha+2}$$
for $s=\mu(\Omega),\Omega\subset \mathbb{D}.$
From \eqref{prva}, we have
\begin{equation*}
  \frac{1-T}{M_\alpha(s)}\leqslant 1-\frac{\int_\Omega |f(z)|^2(1-|z|^2)^{\alpha+2}d\mu(z)}{\int_{\Omega^*}v_\alpha(z)d\mu(z)}
\end{equation*}
i.e.\ 
\begin{equation}\label{druga}
\int_\Omega |f(z)|^2(1-|z|^2)^{\alpha+2}d\mu(z)
\leqslant  \biggl(1-\frac{1-T}{M_\alpha(s)}\biggr)\int_{\Omega^*}v_\alpha(z)d\mu(z).
\end{equation}

Inserting $\Omega:= A_t=\{u(z)>t\},$ for $u(z)=|f(z)|^2(1-|z|^2)^{\alpha+2},$ we get:
\begin{align*}
\int_{\mathbb{D}}(u(z)-t)_+\:\! d\mu(z)&=\int_{A_t}(u(z)-t)d\mu(z)
=\int_{A_t} u(z)d\mu(z)-t \mu(A_t)\\
&\leqslant \int_{A_t^*} v_\alpha(z)d\mu(z)\cdot \biggl(1-\frac{1-T}{M_\alpha(s)}\biggr)
-t \mu(A_t^*)\\
&=\biggl(1-\frac{1-T}{M_\alpha(s)}\biggr) \int_{A_t^*} (v_\alpha(z)-t)d\mu(z)
-t\frac{1-T}{M_\alpha(s)}\\
&\leqslant \biggl(1-\frac{1-T}{M_\alpha(s)}\biggr) \int_{A_t^*} (v_\alpha(z)-t)d\mu(z)\\
&\leqslant \biggl(1-\frac{1-T}{M_\alpha(s)}\biggr) \int_{A_t^*} (v_\alpha(z)-t)_{+} d\mu(z)\\
&\leqslant \biggl(1-\frac{1-T}{M_\alpha(s)}\biggr) \int_{\mathbb{D}} (v_\alpha(z)-t)_+\:\! d\mu(z).
\end{align*}

Now we divide the proof into two cases, depending on a certain relation between $s$ and $t$. 
\subsection{Case $(1+\frac{s}{\pi})^{\alpha+2}\leqslant \frac{2}{t}$}
We easily see that
\begin{align*}
&M_{\alpha}(s)=C\bigg(1+\frac{\alpha+2}{\alpha+1}\bigg[\bigg(1+\frac{s}{\pi}\bigg)^{\alpha+1}-1\bigg]\bigg)\\
&\leqslant C\bigg(1+\frac{\alpha+2}{\alpha+1}\big(2^{\frac{\alpha+1}{\alpha+2}}t^{-\frac{\alpha+1}{\alpha+2}}-1\big)\bigg)
\leqslant 2M_{\alpha}(\mu(B_t)),
\end{align*}
(we used the inequality $2^{\frac{\alpha+1}{\alpha+2}}\leqslant 1+\frac{\alpha+1}{\alpha+2}),$ from which we infer:
\begin{equation}
\label{stabil1}
\int_{\mathbb{D}} (u(z)-t)_+\:\! d\mu(z)
\leqslant \biggl(1-\frac{1-T}{2M_\alpha(\mu(B_t))}\biggr) \int_{\mathbb{D}} (v_\alpha(z)-t)_+\:\! d\mu(z).
\end{equation}
Since $v_\alpha(z)=(1-|z|^2)^{\alpha+2}>t$ if and only if
$|z|^2<1-t^{\frac{1}{\alpha+2}}=r^2(B_t),$ 
we get
$$\rho_0(t)=\mu(B_t)=\pi\Bigl(\frac1{1-\big(1-t^{-\frac1{\alpha+2}}\big)}-1\Bigr)=\pi\big(t^{-\frac{1}{\alpha+2}}-1\big),$$
and
$${M_\alpha(\mu(B_t))=\frac{C}{\alpha+1}\Bigl((\alpha+2)
t^{-\frac{\alpha+1}{\alpha+2}}-1\Bigr).}$$

\subsection{Case $(1+\frac{s}{\pi})^{\alpha+2}\geqslant \frac{2}{t}$}
Let us start from
\begin{multline*}
  \int_{A_t^*}(v_\alpha(z)-t)d\mu(z)=\\
  \int_{\mathbb{D}}(v_\alpha(z)-t)_+\:\! d\mu(z)
  - \int_{B_t\setminus A_t^*} (v_\alpha(z)-t)_+\:\! d\mu(z)
  - \int_{A_t^*\setminus B_t} (v_\alpha(z)-t)_-\:\! d\mu(z).
\end{multline*}
Since $M_\alpha(\mu(A_t^*))>M_\alpha(\mu(B_t)),$ we have
 $\mu(A_t^*)>\mu(B_t)$ and therefore
$B_t\setminus A_t^*=\emptyset,$ thus giving, by Lemma 3:
\begin{multline}
  \int_{A_t^*}(v_\alpha(z)-t)d\mu(z)\\
  =\int_{\mathbb{D}}(v_\alpha(z)-t)_{+}d\mu(z)-\bigg[t(\pi+s)+\frac{\pi}{\alpha+1}\bigg(\big(1+\frac{s}{\pi}\big)^{-\alpha-1}-(\alpha+2)t^{\frac{\alpha+1}{\alpha+2}}\bigg)\bigg].
\end{multline}
We also find 
\begin{align*}
K_{\alpha}(t):=\int_{\mathbb{D}}(v_\alpha(z)-t)_+ \:\!d\mu(z)
&=2\pi\int_0^{r^2(B_t)}\!\Bigl((1-\rho^2)^{\alpha+2}-t\Bigr)\frac{\rho d\rho}{(1-\rho^2)^2}\\
&=\pi \int_0^{r^2(B_t)}\!\Bigl((1-y)^{\alpha+2}-t\Bigr)\frac{dy}{(1-y)^2}\\
&=\pi\Bigl(-\frac{t}{1-y}-\frac{(1-y)^{\alpha+1}}{\alpha+1}\Bigr)\Big|_{y=0}^{r^2(B_t)}\\
&=\pi t\Bigl(1-\frac1{1-r^2(B_t)}\Bigr)+\frac{\pi}{\alpha+1}\bigg(1-(1-r^2(B_t))^{\alpha+1}\bigg)\\
&=\pi t\Bigl(1-t^{-\frac{1}{\alpha+2}}\Bigr)+\frac{\pi}{\alpha+1}\bigg(1-t^{\frac{\alpha+1}{\alpha+2}}\bigg).
\end{align*}
Note that the last expression is not larger than $\frac{\pi}{\alpha+1},$ since 
$$\pi t-\frac{\pi(\alpha+2)}{\alpha+1}t^{\frac{\alpha+1}{\alpha+2}}<\pi t-\pi t^{\frac{\alpha+1}{\alpha+2}}<0.$$

Therefore, \eqref{mainineq} implies
$$\int_{\mathbb{D}}(u(z)-t)_+ \:\!d\mu(z)
\leqslant \int_{\mathbb{D}}(v_\alpha(z)-t)_+ \:\!d\mu(z)-\bigg[t(\pi+s)+\frac{\pi}{\alpha+1}\bigg(\bigg(1+\frac{s}{\pi}\bigg)^{-\alpha-1}-(\alpha+2)t^{\frac{\alpha+1}{\alpha+2}}\bigg]\bigg)$$
$$=\bigg(1-\bigg[t(\pi+s)+\frac{\pi}{\alpha+1}\bigg(\bigg(1+\frac{s}{\pi}\bigg)^{-\alpha-1}-(\alpha+2)t^{\frac{\alpha+1}{\alpha+2}}\bigg)\bigg]\big(K_{\alpha}(t)\big)^{-1}\bigg)\int_{\mathbb{D}}(v_\alpha(z)-t)_+ \:\!d\mu(z).$$

Let us consider:
$$\Phi(s)=t(\pi+s)+\frac{\pi}{\alpha+1}\bigg(\bigg(1+\frac{s}{\pi}\bigg)^{-\alpha-1}-(\alpha+2)t^{\frac{\alpha+1}{\alpha+2}}\bigg).$$
This function is convex and attains its minimum in $s_0$ such that $\big(1+\frac{s_0}{\pi}\big)^{-\alpha-2}=t.$ Hence, it is increasing in $(\pi\big((\frac{k_{\alpha}}{t}\big)^{\frac{1}{\alpha+2}}-1),+\infty)$ and:
$$\Phi(s)\geqslant \Phi\big(\pi\big((\frac{k_{\alpha}}{t}\big)^{\frac{1}{\alpha+2}}-1\big)\big)=\frac{\pi t^{\frac{\alpha+1}{\alpha+2}}}{\alpha+1}\bigg((\alpha+\frac{3}{2})2^{\frac{1}{\alpha+2}}-\alpha-2\bigg)=:C_{\alpha} t^{\frac{\alpha+1}{\alpha+2}}.$$
It is not hard to see that $C_{\alpha}>0$ for all $\alpha>-1.$ Also, it is bounded from below for $\alpha$ close to $-1$ and $C_{\alpha}\sim \frac{\log 2-\frac{1}{2}}{\alpha+1}$ for $\alpha\rightarrow +\infty.$
This gives 
\begin{align}
&\notag \int_{\mathbb{D}}(u(z)-t)_+ \:\!d\mu(z)\\
&\notag \leqslant \bigg(1-C_{\alpha}t^{\frac{\alpha+1}{\alpha+2}}\big(K_{\alpha}(t)\big)^{-1}\bigg)\int_{\mathbb{D}}(v_\alpha(z)-t)_+ \:\!d\mu(z)\\
&\label{stabil2}\leqslant \bigg(1-C_{\alpha}(1-T)t^{\frac{\alpha+1}{\alpha+2}}\big(K_{\alpha}(t)\big)^{-1}\bigg)\int_{\mathbb{D}}(v_\alpha(z)-t)_+ \:\!d\mu(z).
\end{align}
Since
$$M_{\alpha}(\mu(B_t))=\frac{C\big(\alpha+2-t^{\frac{\alpha+1}{\alpha+2}}\big)}{(\alpha+1)t^{\frac{\alpha+1}{\alpha+2}}}\geqslant C t^{-\frac{\alpha+1}{\alpha+2}},$$
by integrating the inequality of the form \eqref{stabil1}, \eqref{stabil2} over $(0,T),$ for every convex function $G,$ we get
\begin{equation}
\label{stabil}
\int_{\mathbb{D}}G(u(z)) \:\!d\mu(z)\leqslant \int_{\mathbb{D}}G(v_{\alpha}(z)) \:\!d\mu(z)-\frac{C_{\alpha}(1-T)}{C}\int_{0}^{T}\frac{G''(t)}{K_{\alpha}(t)M_{\alpha}(\mu(B_t))}dt.
\end{equation}
Let us note that, by \eqref{prva}, every $1-T$ in the above equations can be exchanged with $\min_{|c|=\|f\|}\frac{\|f-cf_w\|^2}{\|f\|^2}=2(1-\sqrt{T}),$ but we use the first expression because of its simplicity.
This proves stability estimates for functionals on weighted Bergman spaces.\\
\subsection{Asymptotics $\alpha\rightarrow +\infty$ and stability of convex functionals acting on Fock space} Here, we will recover the stability estimates for the Fock spaces, proved in \cite{FrNiTi}, by limitting argument. We start from 
$$f(z):=\sum_{k=0}^{+\infty} a_k\sqrt{\frac{\pi^k}{k!}}z^k \in \mathcal{F}^2(\mathbb{C}),$$
where $\mathcal{F}^2(\mathbb{C})$ is the Fock space, the space of all entire functions such that
$$\|f\|_{\mathcal{F}^2(\mathbb{C})}:=\bigg(\int_{\mathbb{C}}|f(z)|^2e^{-\pi|z|^2}dA(z)\bigg)^{\frac{1}{2}}<+\infty.$$
It is easy to see that the condition $\|f\|_{\mathcal{F}^2(\mathbb{C})}=1$ is equivalent to $\sum_{k=0}^{+\infty}|a_k|^2=1.$ Now we use \eqref{stabil} with $u(z)=|f_{\alpha}(z)|^2(1-|z|^2)^{\alpha+2},$ with $f_{\alpha}(z)=\sum_{k=0}^{+\infty}a_k\sqrt{\frac{(\alpha+2)(\alpha+3)\cdot\dots\cdot(\alpha+k+1)}{k!}}z^k.$ This function belongs to $\mathcal{A}^2_{\alpha}(\mathbb{D})$ and $\|f_{\alpha}\|_{\mathcal{A}^2_{\alpha}(\mathbb{D})}=1.$
We have
\begin{align*}
\frac{\alpha+1}{\pi}\int_{\mathbb{D}}G(u(z))d\mu(z)&=\frac{\alpha+1}{\pi}\int_{\mathbb{D}}G(|f_{\alpha}(z)|^2(1-|z|^2)^{\alpha+2})\frac{dA(z)}{(1-|z|^2)^2}\\
&=\frac{\alpha+1}{\alpha}\int_{\sqrt{\frac{\alpha}{\pi}}\mathbb{D}}G\bigg(|f_{\alpha}\big(\frac{\sqrt{\pi} z}{\sqrt{\alpha}}\big)|^2\big(1-\frac{|z|^2}{\alpha}\big)^{\alpha+2}\bigg)\frac{dA(z)}{(1-\frac{|z|^2}{\alpha})^2}.
\end{align*}
Since
\begin{align*}
\big|f_{\alpha}\big(\frac{\sqrt{\pi}z}{\sqrt{\alpha}}\big)\big|^2&=\bigg|\sum_{k=0}^{+\infty}a_kz^k\sqrt{\frac{\pi^k(\alpha+2)(\alpha+3)\cdot\dots\cdot(\alpha+k+1)}{k!\alpha^k}}\bigg|^2\\
&\rightarrow \bigg|\sum_{k=0}^{+\infty}a_kz^k\sqrt{\frac{\pi^k}{k!}}\bigg|^2=|f(z)|^2,
\end{align*}
and
$$(1-\frac{\pi |z|^2}{\alpha})^{\alpha+2}\rightarrow e^{-\pi|z|^2}$$
as $\alpha$ approaches $+\infty,$ while $\sqrt{\frac{\alpha}{\pi}}\mathbb{D}$ tends to the whole $\mathbb{C},$
we have 
$$\frac{\alpha+1}{\pi}\int_{\mathbb{D}}G(u(z))d\mu(z)\rightarrow \int_{\mathbb{C}}G(|f(z)|^2e^{-\pi|z|^2})dA(z), \quad \alpha\rightarrow+\infty.$$
The extremal  $v_{\alpha}(z)=(1-|z|^2)^{\alpha+2}$ corresponds to $f(z)=1$ and we easily get:
$$\frac{\alpha+1}{\pi}\int_{\mathbb{D}}G(v_{\alpha}(z))d\mu(z)\rightarrow \int_{\mathbb{C}}G(e^{-\pi|z|^2})dA(z), \quad \alpha\rightarrow+\infty.$$
	Finally,  $\frac{(\alpha+2-t^{\frac{\alpha+1}{\alpha+2}})}{(\alpha+1)t^{\frac{\alpha+1}{\alpha+2}}}\rightarrow \frac{1}{t}$ and $\frac{\alpha+1}{\pi}K_{\alpha}(t)\rightarrow 1-t+t\log t,$ as $\alpha\rightarrow +\infty,$ therefore, from \eqref{stabil} with $u$ corresponding to $f_{\alpha}(z)$, we conclude:
\begin{equation}
\label{stabilF}
\int_{\mathbb{C}}G(|f(z)|^2e^{-\pi|z|^2}) \:\!d\mu(z)\leqslant \int_{\mathbb{C}}G(e^{-\pi|z|^2}) \:\!d\mu(z)-C'(1-T)\int_{0}^{T}\frac{tG''(t)}{1-t+t\log t}dt.
\end{equation}
\subsection{Case $\alpha=-1$ and stability of functionals on Hardy space}
We can use the approach from the weighted Bergman space setting in the special case when $G'(0)\geqslant 0$ and $G$ is convex (i. e. convex and increasing function) and letting $\alpha\rightarrow -1.$ However, we can see even more. That is, since
$$\int_{A_t}(u(z)-t)_+d\mu(z)\leqslant \bigg(1-\frac{1-T}{M_{-1}(s)}\bigg)\int_{A_t*}(v_{-1}(z)-t)_+ d\mu(z),$$
for an arbitrary $t>0,$ by using $s\leqslant \pi(\frac{1}{t}-1),$ we find $M_{-1}(s)\leqslant M_{-1}(\mu(B_t))=C(1+\log\frac{1}{t}).$(This is the appropriate first case  in the Bergman setting, while the second does not exist!)
Therefore, after integration over $(0,T)$ and using
$G(u)=G'(0)u+\int_{0}^{+\infty}(u-t)_+G''(t)dt,$
we get
$$ \int_{\mathbb{D}}G(u(z)) d\mu(z)\leqslant \int_{\mathbb{D}}G(v_{-1}(z))d\mu(z)-C'(1-T)\int_{0}^{T}\frac{G''(t)}{\big(1+\log\frac{1}{t}\big)(t-1+\log\frac{1}{t})}dt.$$
Note that in case of convex $G$ this function need not be strictly increasing, but only non-decreasing! 
\begin{remark}
We use condition $G'(0)$ since, in the case of Hardy spaces, the functions $u$ and $v_{-1}$ do not have the same integral (equal to one, as in the weighted Bergman case), but instead we have the inequality $\mu(u(z)>t)\leqslant \mu(v_{-1}(z)>t)=\pi(\frac{1}{t}-1)$ and consequently $\int_{\mathbb{D}}u(z)d\mu(z)\leqslant \int_{\mathbb{D}}v_{-1}(z)d\mu(z).$ On the other hand, we work here with the convex functions and therefore only convex and increasing functions (like power functions) are covered by this approach.
\end{remark}
\subsection{Sharpness of the stability exponent} From the estimate\\
$\rho(t)\geqslant \pi\big(\big(\frac{T}{t}\big)^{\frac{1}{\alpha+2}}-1\big)$
which holds for $f$ such that $\rho(t)=0$ for $t\geqslant T$ (see the end of the second section), we have
\begin{align*}
    &\int_{\mathbb{D}}G(v_{\alpha}(z))d\mu(z)-\int_{\mathbb{D}}G(u(z))d\mu(z)=\int_{0}^{T}G'(t)(\rho_0(t)-\rho(t))dt+\int_{T}^{1}G'(t)\rho_0(t)dt\\
    &\leqslant \pi(1-T^{\frac{1}{\alpha+2}})\int_{0}^{T}G'(t)t^{-\frac{1}{\alpha+2}}dt+\pi\int_{T}^{1}G'(t)(t^{-\frac{1}{\alpha+2}}-1)dt\leqslant C_G(1-T).
\end{align*}
Therefore, if $\int_{\mathbb{D}}G(v_{\alpha}(z))d\mu(z)-\int_{\mathbb{D}}G(u(z))d\mu(z)\geqslant c_G(1-T)^{\gamma},$ we get $\gamma\geqslant 1.$ Finally, by the estimate \eqref{stabil} we conclude $\gamma=1.$

\section{Stability for convex functionals associated to a semi-definite positive operator on the weighted Bergman space. The application to the Cauchy wavelets}

Let us consider a semi-definite operator $A$ with trace $TrA=1.$ It can be represented, by the spectral theorem, as
$$A=\sum_{j=0}^{+\infty}p_j\langle \cdot, f_j\rangle f_j, \quad\text{where} \quad p_j\geqslant 0, \sum_{j=0}^{+\infty}p_j=1$$
and $f_j'$s are orthonormal functions from $\mathcal{A}^2_{\alpha}(\mathbb{D}).$ 
Its covariant symbol is
$$u(z):=\langle \kappa_z, A\kappa_z\rangle=\sum_{j=1}^{+\infty}p_j|\langle \kappa_z, f_j\rangle|^2=\sum_{j=0}^{+\infty}p_j|f_j(z)|^2(1-|z|^2)^{\alpha+2},$$
where $\kappa_z(w)=K_z(w)(1-|z|^2)^{\frac{\alpha+2}{2}}=(1-\overline{z}w)^{-\alpha-2}(1-|z|^2)^{\frac{\alpha+2}{2}}$ is the normalized Bergman reproducing kernel and 
$$f_j(z)=\langle f_j, K_z\rangle=\sum_{k=0}^{+\infty}a_{j,k}\frac{z^k}{\sqrt{c_k}} \quad \text{with} \quad \sum_{k=0}^{+\infty}|a_{j,k}|^2=1.$$
The relationship between weighted Bergman spaces and continuous Wavelet transforms associated to a family of Cauchy windows enables us to give wavelet interpretation of the results from this section (for more on wavelet transform, the reader can consult \cite{Abr}-\cite{AbSp},\cite{Be}, \cite{Da}, \cite{DaPa}). Namely, if $\psi_{\beta}$ is defined as a function from the Hardy space $H^{2}(\mathbb{C}^{+})$ such that 
$$\widehat{\psi_{\beta}}(t)=c_{\beta}^{-1}t^{\beta}e^{-t}\chi_{[0,+\infty)},$$
where $c_{\beta}=2^{2\beta-1}\Gamma(2\beta).$
Bergman transform of order $\alpha=2\beta-1$ is given by
$$B_{\alpha}f(z)=y^{-1-\frac{\alpha}{2}}W_{\overline{\psi_{\frac{\alpha+1}{2}}}}f(-x,y)=c_{\alpha}\int_{0}^{+\infty}t^{\frac{\alpha+1}{2}}\widehat{f}(t)e^{\imath z t}dx.$$
It is a unitary mapping from $H^2(\mathbb{C}^{+})$ and $\mathcal{A}^2_{\alpha}(\mathbb{C}^{+}),$ defined respectively as the subset of $L^2(\mathbb{R})$ of functions with the Fourier transform vanishing on the negative real semi-axes and the norm inhereted from $L^2$ and the space of functions with finite norm
$$\int_{\mathbb{C}^{+}}|f(z)|^2y^{\alpha}dxdy,$$
here $z:=x+\imath y.$ By its definition, we have
$$\int_{\mathbb{C^+}}|W_{\psi_{\beta}}f(x,y)|^2\frac{dxdy}{y^2}=\int_{\mathbb{C^+}}|B_{\alpha}f(z)|^2y^{\alpha}dxdy,$$
from which, after using a mapping $T_{\alpha}f(w)=\sqrt{\frac{\pi2^{\alpha}}{\alpha+1}}(1-w)^{-\alpha-2}f(\frac{w+1}{\imath(w-1)}),$ we get the relation
$$\int_{\mathbb{C^+}}|B_{\alpha}f(z)|^2y^{\alpha}dxdy=\frac{\alpha+1}{\pi}\int_{\mathbb{D}}|T_{\alpha}(B_{\alpha}f)(z)|^2(1-|z|^2)^{\alpha}dxdy,$$
and conclude that $\Pi_{\alpha}=T_{\alpha}\circ B_{\alpha}$ is a unitary map from $H^2(\mathbb{C}^+)$ to $\mathcal{A}^2_{\alpha}(\mathbb{D}).$
If we consider the semi-definite operator on $H^2(\mathbb{C}^+)$ defined as
$$\mathfrak{A}=\sum_{j=0}^{+\infty}p_j\langle \cdot, F_j\rangle F_j$$
for orthonormal $F_j'$s induced by $F_j=\Pi^{-1}_{\alpha}f_j,$ then the equivalence of Husimi functions for operators $A$ and $\mathfrak{A}$ follows from:
\begin{align*}
U(x,y)&:=\langle \psi_{\alpha, x,y}, \mathfrak{A}\psi_{\alpha, x,y}\rangle=\sum_{j=1}^{+\infty}p_j|\langle \psi_{\alpha, x,y}, F_j\rangle|^2\\
&=\sum_{j=1}^{+\infty}p_j|\langle \Pi_{\alpha}\psi_{\alpha, x,y},\Pi_{\alpha} F_j\rangle|^2=\sum_{j=1}^{+\infty}p_j|\langle \kappa_z, f_j\rangle|^2\\
&=\langle \kappa_z, A\kappa_z\rangle=u(z).
\end{align*}
Here $\psi_{\alpha, x,y}(t):=\frac{\psi_{\alpha}\big(\frac{t-x}{y}\big)}{\|\psi_{\alpha}\big(\frac{t-x}{y}\big)\|}$ is the normalized kernel of the wavelet transform.
\begin{lemma}
\label{osnovnestvari}
Let us denote $\rho(t)=\mu(\{z\in \mathbb{D}: u(z)>t\}), t>0.$ Then the following is true:\\
a) $$\Delta \log u(z)\geqslant -4(\alpha+2),$$\\
b) The function $\rho(t)$ is non-increasing and absolutely continuous on compact subsets of $(0,+\infty)$ and there holds the differential inequality
$$\rho'(t)\leqslant -\frac{\rho(t)+\pi}{(\alpha+2)t},$$
c) $T=\max_{z \in \mathbb{D}}u(z)\leqslant 1$ and $T=1$ if and only if $A=\langle \cdot, \kappa_{z_0}\rangle \kappa_{z_0}$ for some $z_0.$ For $T=1$ we have $\rho(t)=\rho_0(t)=\pi\big(t^{-\frac{1}{\alpha+2}}-1\big)$ while in the case $T<1$ there exists a unique $t^*$ such that $\rho(t)>\rho_0(t)$ for $0<t<t^*$ and $\rho(t)<\rho_0(t)$ for $t^*<t<T.$ 
\end{lemma}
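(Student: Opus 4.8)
The plan is to establish the three parts in order, with part (a) furnishing the pointwise input that drives the level-set analysis in (b) and (c). For (a) I would split $\log u(z)=\log U(z)+(\alpha+2)\log(1-|z|^2)$, where $U(z)=\sum_j p_j|f_j(z)|^2$. A direct computation gives $\Delta_{\mathrm{eucl}}\log(1-|z|^2)=-4(1-|z|^2)^{-2}$, so the weight contributes exactly $-4(\alpha+2)$ once we pass to the invariant Laplacian $\Delta=(1-|z|^2)^2\Delta_{\mathrm{eucl}}$ (the normalization of \cite{Zhu}), for which $v_\alpha$ attains equality. It therefore suffices to show $\log U$ is subharmonic. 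Writing $g_j=\sqrt{p_j}\,f_j$ (holomorphic), one finds $\partial\bar\partial U=\sum_j|g_j'|^2$ and $\bar\partial U=\sum_j g_j\overline{g_j'}$, whence
\[
\partial\bar\partial\log U=U^{-2}\Bigl(\bigl(\textstyle\sum_j|g_j|^2\bigr)\bigl(\sum_j|g_j'|^2\bigr)-\bigl|\sum_j\overline{g_j}\,g_j'\bigr|^2\Bigr)\ge 0
\]
by the Cauchy--Schwarz inequality in the index $j$. This is the crucial step where positivity of the $p_j$ and the sum structure are used, and it is precisely what upgrades Kulikov's single-function computation to an arbitrary positive semi-definite $A$.

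Granting (a), part (b) follows by the method of Theorem 2.1 in \cite{Ku}. Monotonicity of $\rho$ is immediate, and absolute continuity on compact subsets of $(0,+\infty)$ comes from the real-analyticity of $u$ on $\{u>0\}$ together with Sard's theorem, which for a.e.\ $t>0$ also renders $A_t=\{u>t\}$ compactly contained in $\mathbb{D}$ with smooth boundary. For such $t$ the coarea formula gives $-\rho'(t)=\int_{\{u=t\}}|\nabla u|^{-1}(1-|z|^2)^{-2}\,d\mathcal{H}^1$, while applying the divergence theorem to $\nabla\log u$ over $A_t$ and inserting the bound from (a) yields $\int_{\{u=t\}}|\nabla u|\,d\mathcal{H}^1\le 4(\alpha+2)\,t\,\rho(t)$. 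Combining these by Cauchy--Schwarz bounds the squared hyperbolic perimeter $\bigl(\int_{\{u=t\}}(1-|z|^2)^{-1}d\mathcal{H}^1\bigr)^2$ from above by $4(\alpha+2)\,t\,\rho(t)\,(-\rho'(t))$; the isoperimetric inequality for the metric of curvature $-4$, namely $P^2\ge 4\rho(t)(\pi+\rho(t))$, then cancels the factor $4\rho(t)$ and leaves exactly $\rho'(t)\le-\frac{\rho(t)+\pi}{(\alpha+2)t}$. I expect the regularity bookkeeping here --- absolute continuity of $\rho$, rectifiability of almost every level set, and the absence of a boundary contribution at $\partial\mathbb{D}$ --- to be the main obstacle, since the three analytic ingredients themselves are standard.

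For (c), the bound $T\le1$ is $u(z)=\sum_j p_j|\langle\kappa_z,f_j\rangle|^2\le\sum_j p_j\|\kappa_z\|^2\|f_j\|^2=1$, using $\|\kappa_z\|=1$. If $u(z_0)=1$, every term with $p_j>0$ must saturate Cauchy--Schwarz, forcing $f_j$ to be a unimodular multiple of $\kappa_{z_0}$; as the $f_j$ are orthonormal this can hold for a single index only, so $A=\langle\,\cdot\,,\kappa_{z_0}\rangle\kappa_{z_0}$, and conversely this rank-one projection gives $u(z)=|\langle\kappa_z,\kappa_{z_0}\rangle|^2$ with $u(z_0)=1$. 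In that case $u$ is the Möbius pushforward of $v_\alpha$, so by invariance of $\mu$ its distribution function is $\rho_0(t)=\pi\bigl(t^{-1/(\alpha+2)}-1\bigr)$. When $T<1$, I would compare $\rho$ with $\rho_0$: both obey the same ODE relation (an inequality for $\rho$, an equality for $\rho_0$), so $\frac{d}{dt}\log\frac{\pi+\rho(t)}{\pi+\rho_0(t)}\le0$, the ratio is non-increasing, and $\rho-\rho_0$ changes sign at most once, from $+$ to $-$. Since $\Tr A=1$ forces $\int_0^{\infty}\rho\,dt=\int_{\mathbb{D}}u\,d\mu=\int_{\mathbb{D}}v_\alpha\,d\mu=\int_0^{\infty}\rho_0\,dt$, the sign change must occur exactly once, at a unique $t^*$; this final comparison is Lemma 3.1 of \cite{KuNiOCTi}.
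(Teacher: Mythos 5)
Your proposal is correct, and for part (b) it coincides with the paper's argument step for step: coarea formula, Stokes/divergence theorem fed by part (a), Cauchy--Schwarz, and the hyperbolic isoperimetric inequality $l_h(\partial A_t)^2\geqslant 4\pi\rho+4\rho^2$, combined exactly as you describe. The genuine differences are in (a) and (c). For (a), the paper gets subharmonicity of $\log\sum_j p_j|f_j|^2$ by representing the sum as a supremum of log-moduli of holomorphic combinations of the $f_j$, whereas you verify $\partial\bar\partial\log U\geqslant 0$ directly via Cauchy--Schwarz in the index $j$; your computation is self-contained and arguably cleaner (the paper's sup representation, as printed with phases $e^{\imath\theta_j}$ only, needs coefficient moduli as well to be an identity). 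For the equality case $T=1$ in (c), the paper runs a trace-norm argument: it computes the eigenvalues $\pm\sqrt{1-|\langle f,g\rangle|^2}$ of $\langle\cdot,f\rangle f-\langle\cdot,g\rangle g$, deduces $\|A-\langle\cdot,\kappa_z\rangle\kappa_z\|_{\mathcal{S}_1}\leqslant 2\sqrt{1-u(z)}$, and lets $T=1$ force $A$ into the closed set of rank-one projections; your Cauchy--Schwarz saturation argument plus orthonormality of the $f_j$ reaches the same conclusion more elementarily. The paper's heavier route is not wasted in context --- the bound $D(A)\leqslant 2\sqrt{1-T}$ is reused in its stability estimates --- but for the lemma itself your argument suffices.

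The one place where you lean on a citation while the paper actually works is the \emph{strict} uniqueness of $t^*$. Your monotone-ratio observation (that $(\pi+\rho(t))/(\pi+\rho_0(t))$ is non-increasing) yields at most one sign change, from $+$ to $-$, and the trace normalization $\int_0^1\rho\,dt=\int_0^1\rho_0\,dt$ together with $\rho(t)=0<\rho_0(t)$ on $(T,1)$ yields at least one; but this does not by itself exclude an interval $(c,d)\subset(0,T)$ on which $\rho\equiv\rho_0$ (ratio constantly $1$), which would defeat the strict inequalities $\rho>\rho_0$ on $(0,t^*)$ and $\rho<\rho_0$ on $(t^*,T)$. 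The paper closes exactly this loophole with a rigidity argument: equality of $\rho$ and $\rho_0$ on an interval forces equality throughout the Cauchy--Schwarz/Stokes/isoperimetric chain, so some level set $A_{t_0}$ is a disk on whose interior $\log\bigl(u(z)/(1-|z|^2)^{\alpha+2}\bigr)$ is harmonic; uniqueness for the Dirichlet problem then gives $u(z)=C(1-|z|^2)^{\alpha+2}$ with $C=T=1$, contradicting $T<1$. Deferring this to Lemma 3.1 of \cite{KuNiOCTi} is defensible --- the paper itself invokes that lemma for the same purpose in Section 2 --- but a self-contained proof of the lemma as stated needs this equality analysis spelled out.
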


\begin{proof}
We will just sketch a proof, since it uses the methods from \cite{Ku} and \cite{FrNiTi}. The part $a)$ follows from the fact that the sum of log-subharmonic functions is a log-subharmonic function and $\Delta \log (1-|z|^2)^{\alpha+2}=-4(\alpha+2).$ 
For part $c)$, we start from the co-area formula:
$$\rho'(t)=-\int_{u(z)=t}\frac{1}{|\nabla u(z)|}\frac{|dz|}{(1-|z|^2)^2},$$
which can be applied since $u(z)$ is a real-analytic function (as can easily be checked by differentiation and using Cauchy estimates on concentric disks) with $\lim_{|z|\rightarrow 1^{-}}u(z)=0$ and the set of critical points of the zero measure. \\
Using the Cauchy-Schwarz inequality, we get
\begin{equation}
\label{ksbnej}
l_h(\partial A_t)^2=\bigg(\int_{\partial A_t}\frac{|dz|}{1-|z|^2}\bigg)^2\leqslant \bigg(\int_{\partial A_t}|\nabla u(z)|^{-1}\frac{|dz|}{(1-|z|^2)^2}\bigg)\bigg(\int_{\partial A_t}|\nabla u(z)||dz|\bigg),
\end{equation}
where the second factor of the right-hand side can be determined via the Stokes formula
\begin{equation}
\label{stoks}
\int_{\partial A_t}|\nabla u(z)||dz|=-t\int_{\partial A_t}\big(\nabla u(z)\big)\cdot \nu|dz|=-t\int_{A_t}\Delta \log u(z) dxdy\leqslant -4t(\alpha+2)\rho(t).
\end{equation}
The isoperimetric inequality for the hyperbolic metric gives
\begin{equation}
\label{izopnej}
l_h(\partial A_t)^2\geqslant 4\pi\rho(t)+4\rho^2(t),
\end{equation}
therefore, we obtain
$$-\rho'(t)\geqslant \frac{l_h(\partial A_t)^2}{4(\alpha+2)t\rho(t)}\geqslant\frac{4(\pi\rho(t)+\rho^2(t))}{4(\alpha+2)t\rho(t)}=\frac{\rho(t)+\pi}{(\alpha+2)t}.$$
The first conclusion of the part $c)$ follows from the pointwise estimate $|f(z)|^2(1-|z|^2)^{\alpha+2}\leqslant \|f\|^2_{\mathcal{A}^2_{\alpha}}$ and the assumption that $\sum_{j=0}^{+\infty}p_j=1.$\\
Finally, to prove the part $c)$, let us consider the operator $T_{f,g}:=\langle \cdot, f\rangle f-\langle \cdot, g\rangle g.$ It is self-adjoint and its trace norm is equal to the sum of the absolute values of its eigenvalues. From
$$T_{f,g}(af+bg)=(a+b\langle g,f\rangle)f-(\langle f,g\rangle +b)g=\lambda (af+bg)$$
we find that $\lambda'$s are the eigenvalues of the matrix
$$\begin{bmatrix}
1 & \langle g,f\rangle\\
-\langle f,g\rangle &  -1
\end{bmatrix}
$$
i.e. we have $\lambda=\pm \sqrt{1-|\langle f,g\rangle|^2},$ thus giving $\|T_{f,g}\|_{\mathcal{S}_1}=2\sqrt{1-|\langle f,g\rangle|^2}.$ From this observation, we get:
\begin{align*}
&\|A-\langle \cdot,\kappa_z\rangle \kappa_z\|_{\mathcal{S}_1}=\|\sum_{j=1}^{+\infty}p_j\big(\langle \cdot f_j\rangle f_j- \langle \cdot, \kappa_z\rangle \kappa_z\big)\|_{\mathcal{S}_1}\\
&\leqslant \sum_{j=1}^{+\infty}p_j\|\langle \cdot f_j\rangle f_j- \langle \cdot, \kappa_z\rangle \kappa_z\|_{\mathcal{S}_1}=\sum_{j=1}^{+\infty}p_j\sqrt{1-|\langle f_j, \kappa_z\rangle|^2}\\
&\leqslant 2\sqrt{1-\sum_{j=1}^{+\infty}p_j|\langle f_j, \kappa_z\rangle|^2}=2\sqrt{1-|u(z)|^2}.
\end{align*}
Therefore, $D(A)=\inf_{z \in\mathbb{D}}\|A-\langle \cdot,\kappa_z\rangle \kappa_z\|_{\mathcal{S}_1}\leqslant 2\inf_{z \in\mathbb{D}}\sqrt{1-u(z)}=2\sqrt{1-T}.$ Note that in the special case when $A=\langle \cdot, f\rangle f$ all these inequalities become equalities and $D(A)=2\sqrt{1-T}.$\\
We see that $T=1$ implies $\inf_{z \in\mathbb{D}}\|A-\langle\cdot, \kappa_z\rangle \kappa_z\|_{\mathcal{S}_1}=0,$ i. e. $A=\langle\cdot, \kappa_{z_0}\rangle \kappa_{z_0}$ for some $z_0\in \mathbb{D},$ since rank one operators form closed subspaces of the set of bounded operators on $\mathcal{A}^2_{\alpha}(\mathbb{D})$ and the considered infimum is attained. \\

If $T=1,$ we have $A=\langle \cdot, \kappa_{z_0}\rangle \kappa_{z_0}$ and $\rho(t)=\rho_0(t)$ for $0<t<1.$ Now we turn to case $T<1.$ It can be easily checked that $\rho'(t)\leqslant -\frac{\rho(t)+\pi}{(\alpha+2)t}$ is equivalent with the condition that $\varphi(t):=t^{\frac{1}{\alpha+2}}\big(\rho(t)+\pi\big)$ is non-increasing. We note that for $\rho_0(t)=\pi(t^{-\frac{1}{\alpha+2}}-1)$ we have $\varphi_0(t):=t^{\frac{1}{\alpha+2}}\big(\rho_0(t)+\pi\big)=0$ identically and, since $\int_{0}^{1}\big(\rho(t)-\rho_0(t)\big)dt=0$ and $\varphi(t)-\varphi_0(t)$ has the same sign as $\rho(t)-\rho_0(t),$ we infer the existence of $t^*\in(0,T)$ such that $\rho(t)\geqslant\rho_0(t)$ for $0<t<t^*$ and $\rho(t)\leqslant\rho_0(t)$ for $t^*<t<T.$ Let us prove that such a $t^*$ is unique. If $\rho(t)=\rho_0(t)$ for $t \in (c,d)\subset (0,T).$ Then $\rho'(t)= -\frac{\rho(t)+\pi}{(\alpha+2)t}$ implies that all inequalities in the chain \eqref{ksbnej}, \eqref{stoks}, \eqref{izopnej} are equalities and for some $t_0$ set $A_{t_0}=\{z\in\mathbb{D}: u(z)>t_0\}$ is a disk (denote its radius by $r$); without loss of generality we can assume that its center is $0.$ Therefore, by \eqref{stoks} it follows that $\Delta \log u(z)=4(\alpha+2),$ i.e. $\Delta \log\frac{u(z)}{(1-|z|^2)^{\alpha+2}}=0.$ On $\partial A_{t_0}$ we have $u(z)=t_0,$ therefore, by the uniqueness of the solution of Dirichlet problem we have $u(z)=\frac{t_0}{(1-r^2)^{\alpha+2}}(1-|z|^2)^{\alpha+2}=C(1-|z|^2)^{\alpha+2}.$ Since $\max u(z)=T,$ this constant has the norm $1$ in $\mathcal{A}^2_{\alpha}(\mathbb{D})$ we finally conclude $C=T=1,$ which contradicts with the assumption $T<1.$
\end{proof}

Let us suppose that $u(z)\leqslant u(0)=T<1.$ From $u(0)=\sum_{j=0}^{+\infty}p_j|f_j(0)|^2=\sum_{j=0}^{+\infty}p_ja^2_{j,0}=T,$ we have
$$\sum_{j=0}^{+\infty}p_j(1-|a_{j,0}|^2)=1-T.$$
We can also assume that $a_{j,0}\in\mathbb{R}$ for all $j\geqslant 0.$
From the Cauchy-Schwarz inequality, we get:
\begin{align*}
|f_j(z)|^2&\leqslant |a_{j,0}|^2+\bigg(\sum_{k=1}^{+\infty}|a_{j,k}|^2\bigg)\bigg(\sum_{k=1}^{+\infty}\frac{|z|^{2k}}{c_k}\bigg)+2a_{j,0}\Re\bigg(\sum_{k=1}^{+\infty}a_{j,k}\frac{z^k}{\sqrt{c_k}}\bigg)\\
&\leqslant |a_{j,0}|^2+(1-|a_{j,0}|^2)((1-|z|^2)^{-\alpha-2}-1)+2a_{j,0}\Re \sum_{k=1}^{+\infty}a_{j,k}\frac{z^k}{\sqrt{c_k}},
\end{align*}
and:
\begin{align*}
u(z)(1-|z|^2)^{-\alpha-2}&\leqslant \sum_{j=0}^{+\infty}p_j|a_{j,0}|^2+\sum_{j=0}^{+\infty}p_j(1-|a_{j,0}|^2)((1-|z|^2)^{-\alpha-2}-1)\\
&+2\sum_{j=1}^{+\infty}p_j a_{j,0}\Re\sum_{k=1}^{+\infty}a_{j,k}\frac{z^k}{\sqrt{c_k}}.
\end{align*}
Here, $\frac{z^k}{\sqrt{c_k}}$ are normalized monomials whose $\mathcal{A}^2_{\alpha}$ norm is equal to one.
Denoting $\delta=\sqrt{\frac{1-T}{T}},$ $R(z)=\frac{1}{T}\sum_{j=1}^{+\infty}p_j a_{j,0}\sum_{k=1}^{+\infty}a_{j,k}\frac{z^k}{\sqrt{c_k}}$ and $h(z)=2\Re R(z),$ we have:
\begin{equation}
\label{ocenazau}
\frac{(1-|z|^2)^{-\alpha-2}}{T}\leqslant 1+\delta^2((1-|z|^2)^{-\alpha-2}-1)+h(z).
\end{equation}
From the fact that $u(z)=\sum_{j=0}^{+\infty}p_j|f_j(z)|^2(1-|z|^2)^{\alpha+2}$ has a maximum at $z=0$ we infer that the same holds for $\sum_{j=0}^{+\infty}p_j|f_j(z)|^2,$ thus giving:
$$\sum_{j=1}^{+\infty}p_j a_{j,0}\Re a_{j,1}=0.$$
Let us estimate $|h(z)|,$ $|\frac{\partial h}{\partial r}(z)|$ and $|\frac{\partial^2 h}{\partial r^2}(z)|.$ By the Cauchy-Schwarz inequality, we get:
\begin{align}
|h(z)|^2&\notag\leqslant 4|R(z)|^2\leqslant \frac{4}{T^2}\bigg(\sum_{j=1}^{+\infty}p_j|a_{j,0}|^2\bigg)\bigg(\sum_{j=1}^{+\infty}p_j\bigg|\sum_{k=2}^{+\infty}a_{j,k}\frac{z^k}{\sqrt{c_k}}\bigg|^2\bigg)\\
&\notag\leqslant \frac{4}{T^2}\cdot T\cdot \sum_{j=1}^{+\infty}p_j\bigg(\sum_{k=2}^{+\infty}|a_{j,k}|^2\bigg)\bigg(\sum_{k=2}^{+\infty}\frac{|z|^{2k}}{c_k}\bigg)\\
&\notag\leqslant\frac{4}{T}\sum_{j=1}^{+\infty}p_j(1-|a_{j,0}|^2)\big((1-|z|^2)^{-\alpha-2}-1-(\alpha+2)|z|^2\big)\\
&\label{ocenah}\leqslant 4\delta^2\big((1-|z|^2)^{-\alpha-2}-1-(\alpha+2)|z|^2\big),
\end{align}
and similarly: 
\begin{align*}
|R'(z)|^2&\leqslant \frac{1}{T^2}\bigg(\sum_{j=1}^{+\infty}p_j|a_{j,0}|^2 \bigg(\sum_{k=2}^{+\infty}a_{j,k}\frac{k|z|^{k-1}}{\sqrt{c_k}}\bigg)^2\bigg)\\
&\leqslant \frac{1}{T^2}\sum_{j=1}^{+\infty}p_j|a_{j,0}|^2\bigg(\sum_{k=2}^{+\infty}p_j|a_{j,k}|^2\bigg)\bigg(\sum_{k=2}^{+\infty}\frac{k^2|z|^{2k-2}}{c_k}\bigg)\\
&\leqslant \delta^2\sum_{k=2}^{+\infty}\frac{k^2|z|^{2k-2}}{c_k}=\delta^2(\alpha+2)\bigg(-1+(1+(\alpha+2)|z|^2)(1-|z|^2)^{-\alpha-4}\bigg)
\end{align*}
and
\begin{align*}
&|R''(z)|^2\leqslant \delta^2\sum_{k=2}^{+\infty}\frac{k^2(k-1)^2|z|^{2k-4}}{c_k}\\
&=\delta^2(\alpha+2)(\alpha+3)(1-|z|^2)^{-\alpha-6}\big(2+(\alpha+3)|z|^2(4+(\alpha+4)|z|^2)\big),
\end{align*}
thus, by $|\nabla h(z)|=2|R'(z)|$ and $|D^2h(z)|=2\sqrt{2}|R''(z)|,$ we infer:
\begin{equation}
\label{ocenaprvih}
\bigg|\frac{\partial h}{\partial r}(z)\bigg|\leqslant 2\delta\sqrt{(\alpha+2)\bigg(-1+(1+(\alpha+2)|z|^2)(1-|z|^2)^{-\alpha-4}\bigg)}
\end{equation} 
and
\begin{equation}
\label{ocenadrugih}
\bigg|\frac{\partial^2 h}{\partial r^2}(z)\bigg|\leqslant 2\sqrt{2}\delta\sqrt{(\alpha+2)(\alpha+3)(1-|z|^2)^{-\alpha-6}\big(2+(\alpha+3)|z|^2(4+(\alpha+4)|z|^2)\big)}.
\end{equation} 

This gives that $u(z)>t$ implies 
\begin{equation}
\label{gtheta}
g_{\theta}(r,\sigma):=\frac{t}{T}(1-|z|^2)^{-\alpha-2}-\delta^2((1-|z|^2)^{-\alpha-2}-1)-\sigma h(z)< 1.
\end{equation}
Therefore, we have the inclusion $\{z\in\mathbb{D}: u(z)>t\}\subset \{g_{\theta}(r,1)<1\}\\
\subset \{re^{\imath\theta}: 0\leqslant r\leqslant r_{1}(\theta)\}=:E_1,$
where $r_{\sigma}(\theta)$ is a function defined implicitly via the equation
$$g_{\theta}(r_{\sigma}(\theta), \sigma)=1.$$
It is well defined as can be seen by employing the methods from \cite{GGRT}.  Note that the sets $E_{\sigma}$ are not defined in the same way as in the paper \cite{GKMR}, since the inequality \eqref{gtheta} is slightly cruder than the analogous available in the case of sub-level sets of $|f(z)|^2(1-|z|^2)^{\alpha+2}.$ Therefore, we will sketch the proof of these properties  for $E_{\sigma}$ and prove the improved version of Lemma 3.1 from \cite{GKMR} with $u(z)$ arising from general operator $A$ and without the lower bound $t_0 \geqslant \tilde{t}_0$ (in the terminology of \cite{GKMR}, it was $c_0\geqslant \tilde{c}_0\approx 0,6194$). This will be a consequence of some changes in the second part of the proof in the spirit of \cite{GGRT}. Since, it mainly follows the method from \cite{GKMR}, we will just sketch the proof and briefly explain those parts that contain some modifications or more elegant calculations.\\
First, realize that the sets $E_{\sigma}:=\{re^{\imath\theta}: 0\leqslant r\leqslant r_{\sigma}(\theta)\}$ are compactly embedded in the unit disk and star-shaped. Namely, from
\begin{align*}
g_{\theta}(r,\sigma)&\geqslant t_0(1-r^2)^{-\alpha-2}-\delta^2\big((1-r^2)^{-\alpha-2}-1\big)-2\delta\sqrt{(1-r^2)^{-\alpha-2}-1-(\alpha+2)r^2}\\
&\geqslant (t_0-\delta^2-2\delta)(1-r^2)^{-\alpha-2}>\frac{2t_0}{9}\big(1-r^2\big)^{-\alpha-2}>0,
\end{align*}
for $\delta<\frac{t_0}{3},$ we have $r^2_{\sigma}\leqslant 1-\big(\frac{9}{2t_0}\big)^{-\frac{1}{\alpha+2}}$ and the boundedness of $E_{\sigma}$ follows.  Under the same condition ($\delta<\frac{t_0}{3}$), we infer
\begin{align*}
\frac{d}{dr}g_{\theta}(r,\sigma)&=2\big(\frac{t}{T}-\delta^2\big)(\alpha+2)r(1-r^2)^{-\alpha-3}-\frac{\partial h}{\partial r}(re^{\imath\theta})\\
&\geqslant 2(t_0-\delta^2)(\alpha+2)r(1-r^2)^{-\alpha-3}-2\delta\sqrt{(\alpha+2)\big(-1+(1+(\alpha+2)r^2)(1-r^2)^{-\alpha-4}\big)}\\
&\geqslant  2(t_0-\delta^2)(\alpha+2)r(1-r^2)^{-\alpha-3}-2\delta\sqrt{2(\alpha+2)(\alpha+3)}r(1-r^2)^{-\alpha-3}\\
&\geqslant 2(\alpha+2)r(1-r^2)^{-\alpha-3}(t_0-\delta^2-2\delta)>0,
\end{align*}
which implies that the sets are star-shaped. \\
We now turn to the estimate of the hyperbolic measure of sub-level sets for $u$ by bounding the measure of $E_1.$  
\begin{lemma}
\label{slabaocena}
For every $t_0>0$ there exists $T_0\in (t_0, 1)$ and the absolute constant $C$ such that
$$\rho(t)\leqslant \pi\big(1+Ct_0^{-3}\delta^2\big)\bigg(\bigg(\frac{T}{t}\bigg)^{\frac{1}{\alpha+2}}-1\bigg)=:\tilde{\rho}(t) , \quad  t\in (t_0, T), $$
if $f \in \mathcal{A}^2_{\alpha}$ and $\|f\|_{\mathcal{A}^2_{\alpha}}=1.$ Here $\delta=\
\sqrt\frac{1-T}{T}.$
\end{lemma}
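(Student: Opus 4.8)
The plan is to bound $\rho(t)$ by the hyperbolic measure of the star-shaped confinement region $E_1$ and then to control that measure by a one-parameter deformation argument in the spirit of \cite{GKMR} and \cite{GGRT}. First, \eqref{gtheta} gives the inclusion $\{z\in\mathbb{D}:u(z)>t\}\subset E_1$, so that
$$\rho(t)\leqslant \mu(E_1)=\frac12\int_0^{2\pi}\Bigl(\frac1{1-r_1(\theta)^2}-1\Bigr)d\theta,$$
where $r_1(\theta)$ is the single-valued (by the star-shapedness established above) solution of $g_\theta(r_1(\theta),1)=1$. The defining relation can be solved explicitly for the radial profile: writing $F(r)=(1-r^2)^{-\alpha-2}$ and $x=T/t>1$, the equation $g_\theta(r_\sigma,\sigma)=1$ yields
$$\frac1{1-r_\sigma(\theta)^2}=F(r_\sigma(\theta))^{\frac1{\alpha+2}}=\Bigl(\frac{x(1-\delta^2+\sigma h(r_\sigma(\theta)e^{i\theta}))}{1-x\delta^2}\Bigr)^{\frac1{\alpha+2}},$$
which already isolates the reference value $x^{\frac1{\alpha+2}}=(T/t)^{\frac1{\alpha+2}}$ together with the $\delta^2$ and $h$ corrections. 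Here $1-x\delta^2=1-\frac{1-T}{t}>0$ whenever $T$ is close enough to $1$; this, combined with the confinement condition $\delta<t_0/3$ used above (equivalently $(1-r^2)^{-\alpha-2}\leqslant \frac{9}{2t_0}$), is exactly what fixes the threshold $T_0\in(t_0,1)$.

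Next I would set $\Psi(\sigma):=\mu(E_\sigma)$ for $\sigma\in[0,1]$ and Taylor expand $\Psi(1)=\Psi(0)+\Psi'(0)+\frac12\Psi''(\xi)$ for some $\xi\in(0,1)$. The term $\Psi(0)$ is explicit, since $E_0$ is a genuine disk of radius $r_0$ with $F(r_0)=\frac{x(1-\delta^2)}{1-x\delta^2}$; inserting $\delta^2=\frac{1-T}{T}$ and using the elementary bound $(1+\eta)^{\lambda}\leqslant 1+\lambda\eta$ with $\lambda=\frac1{\alpha+2}\in(0,1)$, one estimates $\Psi(0)$ by $\frac{\pi}{T}\bigl((T/t)^{\frac1{\alpha+2}}-1\bigr)$ up to a remainder of order $\delta^2\bigl(x^{\lambda}-1\bigr)$. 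The first variation vanishes: differentiating $g_\theta(r_\sigma,\sigma)=1$ gives $\partial_\sigma r_\sigma=h(r_\sigma e^{i\theta})/\partial_r g_\theta(r_\sigma,\sigma)$, and at $\sigma=0$ both $r_0$ and $\partial_r g_\theta(r_0,0)$ are independent of $\theta$, so
$$\Psi'(0)=\frac{r_0}{(1-r_0^2)^2}\,\frac1{\partial_r g(r_0,0)}\int_0^{2\pi}h(r_0 e^{i\theta})\,d\theta=0,$$
because $h=2\,\Re R$ with $R$ a power series vanishing at the origin, whence its angular mean is zero.

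The crux, and the step I expect to be the main obstacle, is a uniform bound on the second variation $\Psi''(\xi)$. Differentiating twice produces $\Psi''$ as an integral of terms built from $h$, $\partial_r h$ and $\partial_r^2 h$ (the latter two entering through $\partial_\sigma r_\sigma$ and $\partial_\sigma^2 r_\sigma$), all evaluated along $r_\xi(\theta)$. Since each of $h,\partial_r h,\partial_r^2 h$ carries a factor $\delta$ by \eqref{ocenah}, \eqref{ocenaprvih} and \eqref{ocenadrugih}, every term is $O(\delta^2)=O\bigl(\tfrac{1-T}{T}\bigr)$; the remaining factors are negative powers of $(1-r^2)$ and of $t$, bounded via the confinement $(1-r^2)^{-\alpha-2}\leqslant \frac{9}{2t_0}$, and collecting the worst of them yields a factor $C t_0^{-3}$ with $C$ absolute. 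The improvement over \cite{GKMR} is precisely that the sharper estimate \eqref{ocenah}, with its $-(\alpha+2)|z|^2$ gain (itself a consequence of the orthogonality relation $\sum p_j a_{j,0}\Re a_{j,1}=0$), keeps these constants $\alpha$-uniform and removes the lower restriction on $t_0$. The delicate point is to arrange every remainder to carry the factor $(T/t)^{\frac1{\alpha+2}}-1$, so that the bound degenerates correctly as $t\to T^-$, where $\rho(t)\to 0$; this is achieved by factoring $x^{\lambda}-1$ out of both $\Psi(0)$ and the $\Psi''$ integrand. Combining the three pieces then gives
$$\rho(t)=\Psi(1)\leqslant \frac{\pi}{T}\bigl(1+C t_0^{-3}(1-T)\bigr)\Bigl((T/t)^{\frac1{\alpha+2}}-1\Bigr),$$
which is the asserted estimate.
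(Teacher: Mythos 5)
Your proposal follows essentially the same route as the paper's proof: the inclusion of the super-level set in $E_1$, a second-order Taylor expansion of $\sigma\mapsto\mu(E_\sigma)$ with vanishing first variation (mean value property of $h$, since $R(0)=0$), an explicit evaluation of the disk measure $\mu(E_0)$ with a $\bigl(1+C\delta^2/t_0\bigr)$ correction, and an $O(\delta^2 t_0^{-3})$ bound on the second variation built from \eqref{ocenah}, \eqref{ocenaprvih}, \eqref{ocenadrugih} and the denominator bound \eqref{imenilac}. The one step you flag but defer --- arranging that $\Psi''$ carries the factor $(T/t)^{\frac{1}{\alpha+2}}-1$ --- is resolved in the paper exactly along the lines you would need: the bounds $|r'_\sigma|\leqslant \frac{4\delta}{t_0}r_\sigma(1-r_\sigma^2)$ and $|r''_\sigma|\leqslant 140\frac{\delta^2}{t_0^3}r_\sigma(1-r_\sigma^2)$ are homogeneous in $r_\sigma(1-r_\sigma^2)$, so that $|F''(\sigma)|\leqslant 204\frac{\delta^2}{t_0^3}F(\sigma)$, and the comparison $F(\sigma)\leqslant e^2F(0)$ (obtained by integrating $|r'_s|/(r_s(1-r_s^2))\leqslant 4\delta/t_0$ in $s$) transfers the factor from the explicit $F(0)$ rather than by crude confinement bounds, which alone would only give a constant that fails to degenerate as $t\to T^-$.
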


\begin{proof}
From the formula
$$\rho(t)=\mu(\{z\in\mathbb{D}: u(z)>t\})\leqslant \mu(E_1)=\frac{1}{2}\int_{0}^{2\pi}\int_{0}^{r_{1}(\theta)}\frac{rdrd\theta}{(1-r^2)^2}=\int_{0}^{2\pi}\frac{r_1(\theta)^2d\theta}{1-r_{1}(\theta)^2},$$
after defining $F(\sigma):=\int_{0}^{2\pi}\frac{r_{\sigma}(\theta)^2d\theta}{1-r_{\sigma}(\theta)^2}$ and concluding $F'(0)=0$ from the harmonicity of $h$, by Taylor's formula, we have:
$$F(1)=F(0)+\frac{F''(s)}{2}$$
for some $s\in (0,1).$ 
Our main task becomes estimating $r'_{\sigma},$  $r''_{\sigma}$ and $F''(\sigma).$ The formula for the first derivative reads as:
$$r'_{\sigma}=\frac{h(r_{\sigma}e^{\imath\theta})}{2r_{\sigma}\frac{t}{T}(\alpha+2)(1-r^2_{\sigma})^{-\alpha-3}-2r_{\sigma}\delta^2(\alpha+2)(1-r^2_{\sigma})^{-\alpha-3}-\sigma e^{\imath \theta}\frac{\partial h}{\partial r}(r_{\sigma}e^{\imath \theta})}.$$

Let us estimate the denominator. We have:
\begin{align}
&\notag\bigg|2r_{\sigma}\frac{t}{T}(\alpha+2)(1-r^2_{\sigma})^{-\alpha-3}-2r_{\sigma}\delta^2(\alpha+2)(1-r^2_{\sigma})^{-\alpha-3}-\sigma e^{\imath \theta}\frac{\partial h}{\partial r}(r_{\sigma}e^{\imath \theta})\bigg|\\
&\notag\geqslant 2r_{\sigma}(\frac{t}{T}-\delta^2)(\alpha+2)(1-r^2_{\sigma})^{-\alpha-3}-\sigma\bigg|\frac{\partial h}{\partial r}(r_{\sigma}e^{\imath \theta})\bigg|\\
&\notag\geqslant 2r_{\sigma}(\frac{t}{T}-\delta^2)(\alpha+2)(1-r^2_{\sigma})^{-\alpha-3}-2\delta \sqrt{(\alpha+2)\big(-1+(1+(\alpha+2)r^2_{\sigma})(1-r^2_{\sigma})^{-\alpha-4}\big)}\\
&\notag \geqslant 2r_{\sigma}(1-r^2_{\sigma})^{-\alpha-3}\big((\frac{t}{T}-\delta^2)(\alpha+2)-\delta\sqrt{2(\alpha+2)(\alpha+3)}\big)\\
&\label{imenilac}\geqslant 2(\alpha+2)r_{\sigma}(1-r^2_{\sigma})^{-\alpha-3}\big(\frac{t}{T}-\delta^2-2\delta\big).
\end{align}
Using the inequality \eqref{ocenah} with $z=r_{\sigma}e^{\imath\theta},$ we get:
\begin{equation}
\label{rsigmaprvi}
|r'_{\sigma}|\leqslant \frac{\delta r_{\sigma}(1-r^2_{\sigma})}{\frac{t}{T}-\delta^2-\delta\sqrt{\frac{2(\alpha+3)
}{\alpha+2}}}\leqslant \frac{\delta r_{\sigma}(1-r^2_{\sigma})}{\frac{t}{T}-\delta^2-2\delta}\leqslant \frac{4\delta}{t_0}r_{\sigma}(1-r^2_{\sigma}).
\end{equation}
The second derivative $r''_{\sigma}$ we will write as the following sum:
\begin{align*}
r''_{\sigma}&=\frac{e^{\imath\theta}\frac{\partial h}{\partial r}(r_{\sigma}e^{\imath\theta})r'_{\sigma}}{2r_{\sigma}\frac{t}{T}(\alpha+2)(1-r^2_{\sigma})^{-\alpha-3}-2r_{\sigma}\delta^2(\alpha+2)(1-r^2_{\sigma})^{-\alpha-3}-\sigma e^{\imath \theta}\frac{\partial h}{\partial r}(r_{\sigma}e^{\imath \theta})}\\
&-\frac{2(\alpha+2)(\frac{t}{T}-\delta^2)r'_{\sigma}\big((1-r^2_{\sigma})^{-\alpha-3}+2(\alpha+3)r^2_{\sigma}(1-r^2_{\sigma})^{-\alpha-4}\big)h(r_{\sigma}e^{\imath\theta})}{(2r_{\sigma}\frac{t}{T}(\alpha+2)(1-r^2_{\sigma})^{-\alpha-3}-2r_{\sigma}\delta^2(\alpha+2)(1-r^2_{\sigma})^{-\alpha-3}-\sigma e^{\imath \theta}\frac{\partial h}{\partial r}(r_{\sigma}e^{\imath \theta}))^2}\\
&+\frac{e^{\imath\theta}\big(\sigma\frac{\partial^2 h}{\partial r^2}(r_{\sigma} e^{\imath\theta})r'_{\sigma}e^{\imath\theta}+\frac{\partial h}{\partial r}(r_{\sigma}e^{\imath\theta})\big)h(r_{\sigma}e^{\imath\theta})}{(2r_{\sigma}\frac{t}{T}(\alpha+2)(1-r^2_{\sigma})^{-\alpha-3}-2r_{\sigma}\delta^2(\alpha+2)(1-r^2_{\sigma})^{-\alpha-3}-\sigma e^{\imath \theta}\frac{\partial h}{\partial r}(r_{\sigma}e^{\imath \theta}))^2}\\
&=:T_1+T_2+T_3
\end{align*} 
and estimate it term by term. 
First, using \eqref{ocenaprvih} and 
$$(\alpha+2)\big(-1+(1+(\alpha+2)r^2_{\sigma})(1-r^2_{\sigma})^{-\alpha-4}\big)\leqslant 2(\alpha+2)(\alpha+3)r^2_{\sigma}(1-r^2_{\sigma})^{-\alpha-4},$$
along with \eqref{imenilac}, we get:
$$|T_1|\leqslant \frac{\delta\sqrt{\frac{2(\alpha+3)}{\alpha+2}}}{\frac{t}{T}-\delta^2-\delta\sqrt{\frac{2(\alpha+3)}{\alpha+2}}}|r'_{\sigma}|\leqslant \frac{2\delta}{\frac{t}{T}-\delta^2-2\delta}|r'_{\sigma}|\leqslant \frac{20\delta^2}{t_0^2}r_{\sigma}(1-r^2_{\sigma}),$$
where in the last inequality, we incorporate \eqref{rsigmaprvi}.

Similarly, \eqref{ocenah}, \eqref{imenilac} and
\begin{align*}
&\big(2(\alpha+3)r^2_{\sigma}(1-r_{\sigma})^{-\alpha-4}+(1-r^2_{\sigma})^{-\alpha-3}\big)\sqrt{(1-r^2_{\sigma})^{-\alpha-2}-1-(\alpha+2)r^2_{\sigma}}\\
&\leqslant 3(\alpha+2)r^2_{\sigma}(1-r^2_{\sigma})^{-2\alpha-6}
\end{align*}
gives
$$|T_2|\leqslant \frac{3\delta(\frac{t}{T}-\delta^2)}{(\frac{t}{T}-\delta^2-2\delta)^2}|r'_{\sigma}|\leqslant \frac{72\delta^2}{t_0^3}r_{\sigma}(1-r^2_{\sigma})$$
for $\delta<\frac{t_0}{4}.$
By \eqref{ocenah} and \eqref{ocenadrugih}
and elementary inequalities
$$(1-r^2_{\sigma})^{-\alpha-2}-1-(\alpha+2)r^2_{\sigma}\leqslant \frac{(\alpha+2)(\alpha+3)}{2}r^4_{\sigma}(1-r^2_{\sigma})^{-\alpha-2}$$
and
$$(2+4(\alpha+3)r^2_{\sigma}+(\alpha+4)(\alpha+3)r^4_{\sigma})\leqslant 3(1-r^2_{\sigma})^{-2\alpha-4,}$$
we have:
\begin{align*}
&|\frac{\partial^2 h}{\partial r^2}(r_{\sigma}e^{\imath\theta})
\big||h(r_{\sigma})||r'_{\sigma}|\leqslant 4\sqrt{2}\delta^2\big((1-r^2_{\sigma})^{-\alpha-2}-1-(\alpha+2)r^2_{\sigma}\big)^{\frac{1}{2}}\times\\
&\times\big((\alpha+2)(\alpha+3)(1-r^2_{\sigma})^{-\alpha-6}(2+(\alpha+3)r^2_{\sigma})(4+(\alpha+4)r^2_{\sigma}))\big)^{\frac{1}{2}}|r'_{\sigma}|\\
&\leqslant 4\sqrt{3}(\alpha+2)(\alpha+3)\delta^2r^2_{\sigma}(1-r^2_{\sigma})^{-2\alpha-6}|r'_{\sigma}|,
\end{align*}
while \eqref{ocenah} and \eqref{ocenaprvih} with
\begin{align*}
&\big((1-r^2_{\sigma})^{-\alpha-2}-1-(\alpha+2)r^2_{\sigma}\big)\big((1+(\alpha+2)r^2_{\sigma})(1-r^2_{\sigma})^{-\alpha-4}-1\big)\\
&\leqslant (\alpha+2)(\alpha+3)^2r^6_{\sigma}(1-r^2_{\sigma})^{-2\alpha-6}
\end{align*}
gives
$$|\frac{\partial h}{\partial r}(r_{\sigma}e^{\imath\theta})
\big||h(r_{\sigma})|\leqslant 4\delta^2(\alpha+2)(\alpha+3)r^3_{\sigma}(1-r^2_{\sigma})^{-\alpha-3} $$
and, finally, after using \eqref{rsigmaprvi}:

\begin{align*}
|T_3|&\leqslant \frac{2\sqrt{3}\delta^2|r'_{\sigma}|}{(\frac{t}{T}-\delta^2-2\delta)^2}+\frac{2\delta^2r_{\sigma}(1-r^2_{\sigma})}{(\frac{t}{T}-\delta^2-2\delta)^2}\\
&\leqslant \frac{21\sqrt{3}r_{\sigma}(1-r^2_{\sigma})\delta^2}{t_0^3}+\frac{11\delta^2r_{\sigma}(1-r^2_{\sigma})}{t^2_0}\\
&\leqslant \frac{48\delta^2}{t^3_0}r_{\sigma}(1-r^2_{\sigma}).
\end{align*}
Adding the estimates for $T_1, T_2$ and $T_3,$ we get
$$|r''_{\sigma}|\leqslant 140\frac{\delta^2}{t^3_0}r_{\sigma}(1-r^2_{\sigma}).$$
Now, from
$$F''(\sigma)=\int_{0}^{2\pi}\bigg(\frac{1+3r^2_{\sigma}}{(1-r^2_{\sigma})^3}(r'_{\sigma})^2+\frac{r_{\sigma}r''_{\sigma}}{(1-r^2_{\sigma})^2}\bigg)d\theta$$
inserting the estimates for $r'_{\sigma}$ and $r''_{\sigma},$ we get
\begin{align*}
|F''(\sigma)|&\leqslant \int_{0}^{2\pi}\bigg(\frac{64\delta^2r^2_{\sigma}}{t^2_0(1-r^2_{\sigma})^3}(1-r^2_{\sigma})^2+ \frac{140r^2_{\sigma}\delta^2}{t^3_0(1-r^2_{\sigma})^2}(1-r^2_{\sigma})  \bigg) d\theta\\
&\leqslant 204\frac{\delta^2}{t^3_0}F(\sigma),
\end{align*}
thus concluding $\rho(t)\leqslant F(0)+102\frac{\delta^2}{t^3_0}F(\sigma).$ We can compare $r_{\sigma}$ and $r_0$ through
$$\bigg|\log \frac{r_{\sigma}}{\sqrt{1-r^2_{\sigma}}}-\log \frac{r_0}{\sqrt{1-r^2_0}}\bigg|\leqslant \int_{0}^{\sigma}\frac{|r'_s|}{r_s(1-r^2_s)}ds\leqslant \sigma\frac{4\delta}{t_0}\leqslant 1,$$
therefore $\frac{r^2_{\sigma}}{1-r^2_{\sigma}}\leqslant e^2\frac{r^2_0}{1-r^2_0}$ and
$F(\sigma)\leqslant e^2F(0).$

Let us prove $F(0)\leqslant \pi\big(1+\frac{2\delta^2}{t_0}\big)\big(\big(\frac{T}{t}\big)^{\frac{1}{\alpha+2}}-1\big).$
From
$$1-r^2_0=\bigg(\frac{1-\delta^2}{\frac{t}{T}-\delta^2}\bigg)^{-\frac{1}{\alpha+2}}$$
we easily calculate
$$\frac{\mu(E_0)}{\pi}=\frac{ r^2_0}{1-r^2_0}=\bigg(\frac{1-\delta^2}{\frac{t}{T}-\delta^2}\bigg)^{\frac{1}{\alpha+2}}-1.$$
We will prove that 
$$\bigg(\frac{1-\delta^2}{\frac{t}{T}-\delta^2}\bigg)^{\frac{1}{\alpha+2}}-1\leqslant \bigg(1+\frac{2\delta^2}{t_0}\bigg)\bigg(\bigg(\frac{T}{t}\bigg)^{\frac{1}{\alpha+2}}-1\bigg).$$
After introducing the change of variables $x=\frac{T}{t}\in [1,\frac{1}{t_0}]$ it is equivalent with
$$f(x):=x^{\frac{1}{\alpha+2}}\bigg(1+\frac{2\delta^2}{t_0}\bigg)-\frac{2\delta^2}{t_0}-\bigg(\frac{1-\delta^2}{\frac{1}{x}-\delta^2}\bigg)^{\frac{1}{\alpha+2}}\geqslant 0.$$
We see that 
$$f'(x)=\frac{x^{\frac{1}{\alpha+2}-1}}{\alpha+2}\bigg(1+\frac{2\delta^2}{t_0}-\frac{(1-\delta^2)^{\frac{1}{\alpha+2}}}{(1-x\delta^2)^{\frac{1}{\alpha+2}+1}}\bigg)$$
and, in order to prove $f'(x)\geqslant 0,$ since $\frac{1-\delta^2}{1-\delta^2x}>1,$ it is enough to prove 
$$1+\frac{2\delta^2}{t_0}\geqslant \frac{1-\delta^2}{(1-\delta^2x)^2}.$$ 
We easily find 
$$\big(1+\frac{2\delta^2}{t_0}\big)(1-x\delta^2)^2\geqslant \big(1+\frac{2\delta^2}{t_0}\big)(1-\frac{\delta^2}{t_0})^2=1-\frac{3\delta^4}{t_0^2}+\frac{2\delta^6}{t_0^3}$$
and this is $\geqslant 1-\delta^2,$ since $\delta\leqslant \frac{t_0}{\sqrt{3}}.$
Therefore, $f$ increases in $[1,\frac{1}{t_0}]$ under the condition $\delta\leqslant \frac{t_0}{\sqrt{3}}$ and $f(1)=0$ implies $f(x)\geqslant 0.$
\end{proof}

\begin{remark}
In the following arguments, the existence of $t_1\in (t_0, T)$ for which $\tilde{\rho}(t)\leqslant \rho_0(t)$ in the range $t\in [t_1,T)$ will be very important. By comparing the appropriate expressions, we can find that $t_1:=\big(T^{\frac{1}{\alpha+2}}-\frac{t_0^3}{C\delta^2}(1-T^{\frac{1}{\alpha+2}})\big)^{\alpha+2}.$
\end{remark}

\subsection{First proof.} Following the arguments from \cite{FrNiTi} we find points $a,b \in (0,1), a<b$ such that $G'(b)-G'(a)>0$ and $H(\tilde{t})\leqslant\frac{I}{G'(b)-G'(a)},$ where $\tilde{t}$ is such that $G(\tilde{t})=\min\{G(a), G(b)\}.$ We divide the rest of the proof into four cases:

$1^{\circ}\quad \tilde{t}\leqslant t^*:$  From $H(t)=\int_{0}^{t}\big(\rho(\tau)-\rho_0(\tau)\big)d\tau=\int_{t}^{1}\big(\rho_0(\tau)-\rho(\tau)\big)d\tau$ we have that $-H'(t)=-t^{-\frac{1}{\alpha+2}}\big(-\pi+t^{\frac{1}{\alpha+2}}(\rho(t)+\pi)\big),$ and it increases on $t,$ since both factors (without the sign $''-''$) are positive and decreasing, So, $H$ is concave on $(0,t^*)$ and since $H(0)=0,$ the quotient $\frac{H(t)}{t}$ is non-increasing, thus giving:
$$H(t^*)\leqslant \frac{H(t^*)}{t^*}\leqslant \frac{H(\tilde{t})}{\tilde{t}}\leqslant \frac{H(\tilde{t})}{a}\leqslant \frac{I}{a(G'(b)-G'(a))}.$$ 
$2^{\circ}\quad \tilde{t}>T:$ We will just use that $H$ is decreasing:
$$\frac{I}{G'(b)-G'(a)}\geqslant H(\tilde{t})\geqslant \frac{\pi(1-\tilde{t})^2}{2(\alpha+2)}\geqslant \frac{\pi(1-b)^2}{2(\alpha+2)}\geqslant \frac{\pi(1-b)^2}{2(\alpha+2)}(1-T).$$
Now fix some $t_0\in (0,1).$ \\
$3^{\circ}\quad \tilde{t} \in (t^*, t_1),$ where $t_1$ is as defined in the remark after the previous lemma: We use the estimate
$$H(t_1)\geqslant\int_{t_1}^{T}\big(\rho_0(\tau)-\rho(\tau)\big)d\tau\geqslant c(1-T)$$
and the monotonicity of $H,$ thus getting $H(\tilde{t})\geqslant H(t_1)\geqslant c(1-T).$\\
$4^{\circ}\quad  \tilde{t}\in (t_1, T):$ We have $H(t)\geqslant \int_{t}^{1}\tau^{\frac{1}{\alpha+2}}\big(\rho_0(\tau)-\rho(\tau)\big)d\tau=:h(t)$
and $h'(t)=t^{\frac{1}{\alpha+2}}\big(\rho(\tau)-\rho_0(\tau)\big),$ which is decreasing, we conclude that $h$ is concave. Define:
$$g(t)=\begin{cases}
h(t), & t \in [t_1, T],\\
h(T)+h'(T)(t-T),& t \in (T, t_2],
\end{cases}$$
where $t_2$ is such that $h(T)+h'(T)(t_2-T)=0.$ More accurate, we have $t_2=T+\frac{\int_{T}^{1}(1-\tau^{\frac{1}{\alpha+2}})d\tau}{1-T^{\frac{1}{\alpha+2}}}=\frac{1-T^{\frac{\alpha+3}{\alpha+2}}}{(\alpha+3)\big(1-T^{\frac{1}{\alpha+2}}\big)}$ and it belongs to the interval $[T,1].$
Since $\lim_{T\rightarrow 1}t_2=1$ there exists $c>0$ such that for $1-T<c$ we have $t_2-b>\frac{1-b}{2}.$ The function $g$ is concave and we get
\begin{align*}
g(\tilde{t})&=g(\tilde{t})-g(t_2)=\frac{g(\tilde{t})-g(t_2)}{t_2-\tilde{t}}\cdot(t_2-\tilde{t})\\
&\geqslant \frac{g(t_1)-g(t_2)}{t_2-t_1}\cdot(t_2-\tilde{t})=\frac{g(t_1)}{t_2-t_1}\cdot(t_2-\tilde{t})\\
&>\frac{1-b}{2}g(t_1).
\end{align*}
By Chebychev's integral inequality, we conclude
\begin{align*}
\int_{t_1}^{1}\tau^{\frac{1}{\alpha+2}}\big(\rho_0(\tau)-\rho(\tau)\big)d\tau &\geqslant(1-t_1) \frac{\int_{t_1}^{1}\big(\rho_0(\tau)-\rho(\tau)\big)d\tau}{\int_{t_1}^{1}\tau^{-\frac{1}{\alpha+2}}d\tau}\\
&=\frac{\alpha+1}{\alpha+2}\big(1-t_1^{\frac{\alpha+1}{\alpha+2}}\big)^{-1}(1-t_1) \int_{t_1}^{1}\big(\rho_0(\tau)-\rho(\tau)\big)d\tau\\
&\geqslant \frac{1-t_1}{ \log\frac{1}{t_1}}\int_{t_1}^{1}\big(\rho_0(\tau)-\rho(\tau)\big)d\tau\\
\end{align*}
Recall that $c$ is chosen such that for $T>1-c$  $t_2-b>\frac{1-b}{2}$ is satisfied, therefore $t_1$ which depends on $T$ is at a positive distance from $0!$ \\
If $T\leqslant 1-c$ then $H(\tilde{t})\geqslant H(T)\geqslant \frac{\pi}{\alpha+2}(1-T)^2\geqslant \frac{\pi c}{\alpha+2}(1-T),$ and the conclusion follows. \\
\begin{remark} (Weak stability for $\mathcal{A}^p_{\alpha}$)
For $f \in \mathcal{A}^p_{\alpha}$ and the hyperbolic measure of the super-level set of the corresponding function $u,$ one can prove that:
\begin{align*}
H(t)&=\int_{t}^{1}(\rho_0(\tau)-\rho(\tau))d\tau=\int_{t}^{1}\rho_0(\tau)d\tau\\
&=\pi\int_{t}^{1}\big(\tau^{-\frac{1}{\alpha+2}}-1\big)d\tau=\frac{\pi}{\alpha+1}\big((\alpha+1)t+1-(\alpha+2)t^{\frac{\alpha+1}{\alpha+2}}\big)\\
&\geqslant \frac{\pi}{2(\alpha+2)}(1-t)^2,
\end{align*}
thus, getting
$$H(t^*)\geqslant \frac{\pi}{2(\alpha+2)}\big(1-t\big)^2\geqslant \frac{\pi}{2(\alpha+2)}\big(1-T\big)^2,$$ 
where the above approach finally gives the estimate $I\geqslant c(1-T)^2,$ with analogously defined $I$ and $T$. More general and very elegant and instructive proof of the stability of these estimates for $\mathcal{A}^p_{\alpha}$ is given in \cite{NiRiTi}. One can notice that the method of proof of the stability of concentration estimate from \cite{GKMR} can be verbatim repeated for $\rho(t)$ and $\rho_0(t)$ as well as for their inverses, hence the main result of \cite{GKMR} holds in this general setting. 
\end{remark}
\subsection{Second proof.} We follow \cite{NiRiTi}. Fix $t_0\in (0,T)$ and $T\geqslant \frac{2}{3}.$ By using decomposition $G(t)=G_1(t)+(G(t)-G_1(t)),$ where $G_1(t)$ is a convex function
$$
G_1(t)=
\begin{cases}
G(t), 0\leqslant t\leqslant \tau_1,\\
G'_{-}(t)(t-\tau_1)+G(\tau_1), \tau_1\leqslant t\leqslant 1.
\end{cases}
$$
Hence, we have:
\begin{align*}
&\int_{\mathbb{D}}G(v_{\alpha}(z))d\mu(z)-\int_{\mathbb{D}}G(u(z))d\mu(z)\\
&=\int_{0}^{1}G_1'(t)\rho_0(t)dt-\int_{0}^{\tau_1}G_1'(t)\rho(t)dt+\int_{\tau_1}^{1}(G'(t)-G_{-}'(\tau_1))(\rho_0(t)-\rho(t))dt\\
&\geqslant \int_{\tau_1}^{1}(G'(t)-G_{-}'(\tau_1))(\rho_0(t)-\rho(t))dt.
\end{align*}
by Kulikov's inequality. This holds for every $\tau_1\leqslant t\leqslant T.$ 
From the lemma \ref{slabaocena}, we have:
$$\rho(t)\leqslant \pi\big(1+C_0(1-T))\bigg(\bigg(\frac{T}{t}\bigg)^{\frac{1}{\alpha+2}}-1\bigg)=(1+C_0(1-T))\rho_0\bigg(\frac{t}{T}\bigg)=:\tilde{\rho}(t) , \quad  t\in (t_0, T), $$
with $C_0:=\frac{3Ct_0^{-3}}{2}.$ Suppose that there are $t_0<\tau_1<\tau_2<\tau_3\leqslant T$ such that the estimates 
\begin{align}
\notag &\rho_0(t)\geqslant \tilde{\rho}(t), \quad \tau_1\leqslant t\leqslant T, \\
\label{nejednakosti} &\rho_0(t)\geqslant \tilde{\rho}(t)+c(1-T), \quad \tau_1\leqslant t\leqslant \tau_2<\tau_3\leqslant T,
\end{align}
are available for some $c>0.$ Then the conclusion follows from:
\begin{align*}
 &\int_{\tau_1}^{1}(G'(t)-G_{-}'(\tau_1))(\rho_0(t)-\rho(t))dt\geqslant \int_{\tau_1}^{\tau_2}(G'(t)-G_{-}'(\tau_1))(\rho_0(t)-\tilde{\rho}(t))dt\\
 &\geqslant c(1-T)\int_{\tau_1}^{\tau_2}(G'(t)-G_{-}'(\tau_1))dt=C'(1-T).
\end{align*}
To see that \eqref{nejednakosti} holds for some $\tau_1<\tau_2<\tau_3\leqslant T,$ we denote that:
\begin{align}
&\notag \rho_0(t)-\rho_0\bigg(\frac{t}{T}\bigg)-C_0(1-T)\rho_0\bigg(\frac{t}{T}\bigg)\\
&\label{t123}\geqslant \frac{t(1-T)}{T}\rho_0'(t)-C_0(1-T)\rho_0\bigg(\frac{t}{T}\bigg)=(1-T)\bigg(\frac{t}{T}-C_0\rho_0\bigg(\frac{t}{T}\bigg)\bigg)\geq c_1(1-T),
\end{align}
with $c_1$ equal to $0$ or $c$. This is possible since $\phi(x)=x-C_0\rho_0(x)$ is increasing and equal to $1$ for $x=1,$ therefore, the positive or strictly positive for $x$ close enough to $1.$\\
 
 \section{Stability of the estimate for the weighted Bergman spaces in the unit ball in $\mathbb{C}^n$}

In this section, we sketch a proof of the sharp quantitative estimate for convex functionals in weighted Bergman spaces in the unit ball $\mathbb{B}_n\subset \mathbb{C}^n$. Define analogously the case of one complex variable: 
$\rho(t)=\mu_n(\{z\in\mathbb{B}_n: u(z)>t\})$ and 
$\rho_0(t)=\mu_n(\{z\in\mathbb{B}_n: (1-|z|^2)^{\alpha+n+1}>t\})$, where $u(z)=|f(z)|^2(1-|z|^2)^{\alpha+n+1}$.
The distribution $\rho_0$ can be explicitly calculated:
$$\rho_0(t)=\int_{(1-|z|^2)^{\alpha+n+1}>t}\frac{dV(z)}{(1-|z|^2)^{n+1}}=\frac{\omega_n}{2n}(t^{-\frac{1}{\alpha+n+1}}-1)^n.$$
Here $\omega_n$ is the surface area of the unit sphere in $
\mathbb{S}^{2n-1}.$ If $\max_{z\in \mathbb{B}_n}u(z)=T=u(0),$ we write 
$$\frac{f(z)}{\sqrt{T}}=1+R(z),\quad \text{where } R(z)=\sum_{|\beta|\geqslant 2}\frac{a_{\beta}z^{\beta}}{\sqrt{c_{\beta}T}}, \quad \text{and}\quad  z^{\beta}:=z_1^{\beta_1}z_2^{\beta_2}\cdot\dots z_n^{\beta_n}.$$ 
We estimate $R(z)$ and its partial derivatives by the Cauchy-Schwarz inequality
$$|R(z)|^2\leqslant \big|\sum_{|\beta|\geqslant 2}\frac{a_{\beta}}{\sqrt{c_{\beta}T}}\big|^2\leqslant \frac{1}{T}\sum_{|\beta|\geqslant 2}|a_{\beta}|^2\sum_{|\beta|\geqslant 2}\frac{|z^{2\beta}|}{c_{\beta}}
=\frac{1-T}{T}\sum_{|\beta|\geqslant 2}\frac{|z^{2\beta}|}{c_{\beta}},$$
where the last sum is equal to
\begin{align*}
&\sum_{|\beta|\geqslant 2}\frac{|z_1|^{2\beta}|z_2|^{2\beta_2}\dots |z_n|^{2\beta_n}}{\Gamma(1+\beta_1)\Gamma(1+\beta_2)\dots \Gamma(1+\beta_n)}\cdot \frac{\Gamma(n+\alpha+1+\beta_1+\beta_2+\dots+\beta_n)}{\Gamma(n+\alpha+1)}\\
&=\sum_{k=2}\frac{\Gamma(n+\alpha+k+1)}{\Gamma(n+\alpha+1)}\bigg(\sum_{|\beta|=k}\frac{k!}{\beta_1!\beta_2!\dots \beta_n!}|z_1|^{2\beta}|z_2|^{2\beta_2}\dots |z_n|^{2\beta_n}\bigg)\\
&=\sum_{k=2}\frac{\Gamma(n+\alpha+k+1)}{\Gamma(n+\alpha+1)}|z|^k=(1-|z|^2)^{-n-\alpha-1}-1-(\alpha+n+1)|z|^2=:g(|z|^2).
\end{align*}
Similarly, 
$$\bigg|\frac{\partial R}{\partial z_i}\bigg|^2\leqslant \bigg|\sum_{|\beta|\geqslant 2}\frac{a_{\beta}}{\sqrt{c_{\beta}T}}\beta_i z^{\beta-e_i}\bigg|^2\leqslant \frac{1-T}{T}\sum_{|\beta|\geqslant 2} \frac{\beta_i^2|z^{2\beta-2e_i}|}{c_{\beta}}.$$
Denote $h_i=|z_i|^2.$ Then 
\begin{align*}
&\sum_{|\beta|=k\geqslant 2}\frac{\Gamma(n+\alpha+k+1)}{\Gamma(n+\alpha+1)}\bigg(\sum_{\beta_i=1}\frac{k!}{\beta_1!\beta_2!\dots \beta_n!}\beta_i^2h_1^{\beta_1}\dots h_i^{\beta_i-1}\dots h_n^{\beta_n}\bigg)\\
&=\frac{d}{dh_i}g(h_1+h_2+\dots+h_n)+h_i\frac{d^2}{dh_i^2}g(h_1+h_2+\dots+h_n)\\
&=(\alpha+n+1)\big((1-|z|^2)^{-\alpha-n-2}-1\big)+(\alpha+n+1)(\alpha+n+2)|z_i|^2(1-|z|^2)^{-\alpha-n-3}\\
&\leqslant 2(\alpha+n+1)(\alpha+n+2)|z|^2(1-|z|^2)^{-\alpha-n-3}=:a(|z|^2)
\end{align*}
Using these estimates, we get
\begin{align*}
 t&<|f(z)|^2(1-|z|^2)^{\alpha+n+1}=T(1-|z|^2)^{\alpha+n+1}(1+|R(z)|^2+2\Re R(z)) \\
 &\leqslant T(1-|z|^2)^{\alpha+n+1}\big(1+\delta^2((1-|z|^2)^{-\alpha-n-1}-1-(\alpha+n+1)|z|^2)+2\Re R(z)\big)
\end{align*}
and therefore, $\{z\in\mathbb{B}_n: u(z)>t\}\subset E_{\sigma},$ where 
$g_{\omega}(r,\sigma)=\frac{t}{T}(1-r^2)^{-\alpha-n-1}-\delta^2g(r)-2\sigma \Re R(r\omega)$ and
$E_{\sigma}=\{g_{\omega}(r,\sigma)<1\}.$ Using the estimates for $R,$ as in the one-dimensional complex case, one can see $g_{\omega}(r,\sigma)\geqslant \big(t_0^2-\delta^2-2\delta\big)(1-r^2)^{-\alpha-n-1}$
for $\delta\leqslant \frac{t_0}{3}.$ Consequently $r_{\sigma}^2\leqslant 1-\big(\frac{2t_0}{9}\big)^{-\frac{1}{\alpha+n+1}}$ and $E_{\sigma}$ are compactly embedded in $\mathbb{B}_n.$
Analogously, from
\begin{align*}
&\frac{d}{dr}g_{\omega}(r,\sigma)=\frac{2rt}{T}(\alpha+n+1)(1-r^2)^{-\alpha-n-2}-\delta^2\frac{d}{dr}g(r)-2\sigma \frac{d}{dr}\Re R(r\omega)\\
&\geqslant \frac{2rt}{T}(\alpha+n+1)(1-r^2)^{-\alpha-n-2}-2r\delta^2(\alpha+n+1)(1-r^2)^{-\alpha-n-2}-2\sqrt{2n}\delta\sqrt{a(r^2)}\\
&\geqslant 2r(\alpha+n+1)(\alpha+n+2)(1-r^2)^{-\alpha-n-2}\bigg(\frac{t}{T}-\delta^2-8\sqrt{2n}\delta\bigg)>0
\end{align*}
for $\delta\leqslant \frac{t_0}{9\sqrt{2n}},$ we conclude that these sets are star-shaped. Their hyperbolic measure can be calculated via
\begin{align*}
&\mu_n(\{\omega\in \mathbb{S}^{2n-1},r \in [0,r_{\sigma}(\omega)]\})\\
&=\omega_n\int_{\mathbb{S}^{2n-1}}\bigg(\int_{0}^{r_{\sigma}(\omega)}\frac{r^{2n-1}}{(1-r^2)^{n+1}}\bigg)dS(\omega)\\
&=\frac{\omega_n}{2n}\int_{\mathbb{S}^{2n-1}}\frac{r_{\sigma}^{2n}(\omega)}{(1-r_{\sigma}^2(\omega))^{n}}dS(\omega),
\end{align*}
where $g_{\omega}(r_{\sigma},\sigma)=1.$  Define $F(\sigma):=\frac{\omega_n}{2n}\int_{\mathbb{S}^{2n-1}}\frac{r_{\sigma}^{2n}(\omega)}{(1-r_{\sigma}^2(\omega))^{n}}dS(\omega).$ From
$$F'(\sigma)=\omega_n\int_{\mathbb{S}^{2n-1}}\frac{r_{\sigma}^{2n-1}(\omega)\frac{d}{d\sigma}r_{\sigma}(\omega)}{(1-r_{\sigma}^2(\omega))^{n+1}}dS(\omega)$$
we have
\begin{align*}
F'(0)&=\omega_n\int_{\mathbb{S}^{2n-1}}\frac{r_0^{2n-1}(\omega)}{(1-r_0^2(\omega))^{n+1}}h(r_0(\omega))dS(\omega)\\
&=\frac{\omega_nr_0^{2n-1}}{(1-r_0^2)^{n+1}}\int_{\mathbb{S}}h(r_0(\omega))dS(\omega)
=\frac{\omega_nr_0^{2n-1}h(0)}{(1-r_0^2)^{n+1}}=0,
\end{align*} 
because of (pluri-)harmonicity of $h:=\Re R(z).$ 
Also
$$F''(\sigma)=\omega_n\int_{\mathbb{S}^{2n-1}}\bigg(\frac{r_{\sigma}^{2n-1}(\omega)}{(1-r_{\sigma}^2(\omega))^{n+1}}r''_{\sigma}(\omega)+\frac{r_{\sigma}^{2n-2}(\omega)\big(2n-1+3r_{\sigma}^2(\omega)\big)}{(1-r_{\sigma}^2(\omega))^{n+2}}(r'_{\sigma}(\omega))^2\bigg)dS(\omega).$$
Therefore, $F(s)=F(0)+\frac{1}{2}F''(\xi),$ for $\xi \in (0,s).$ The rest of the proof follows by tedious calculations similar to those in the fourth section - we have to find the estimates for the second derivatives of $R,$ then for $r'_{\sigma}, r''_{\sigma},$ for $F''(\sigma)$ in terms of $F(\sigma)...$\\ 
Therefore, we have the following
\begin{lemma}
\label{slabaocena}
For every $\alpha>-1$, $f \in \mathcal{A}^2_{\alpha}(\mathbb{B}_n)$ and $t_0 \in (0,1),$ there exist $T_0\in (t_0, 1)$ and the absolute constant $C'$ depending only on $t_0$ and $n$  such that
$$\rho(t)\leqslant \frac{\omega_n}{2n}\big(1+C'\delta^2\big)\bigg(\bigg(\frac{T}{t}\bigg)^{\frac{1}{\alpha+n+1}}-1\bigg)^n=:\tilde{\rho}(t) , \quad  t\in (t_0, T), $$
if $f \in \mathcal{A}^2_{\alpha}(\mathbb{B}_n)$ and $\|f\|_{\mathcal{A}^2_{\alpha}}(\mathbb{B}_n)=1$ and $\delta=\
\sqrt\frac{1-T}{T}.$
\end{lemma}
Since $\rho_0$ is again a convex and a decreasing function, we have the analogs of \eqref{t123} and, therefore, 
\eqref{nejednakosti} hold for some $\tau_1\leqslant t\leqslant \tau_2<\tau_3\leqslant T$ for $T$ close to $1.$ Along with Theorem 5.2. from \cite{NiRiTi}, we finish a proof of Theorem .

\section{Hardy space counterpart} 
In \cite{GKMR} the sharp quantitative form of the concentration inequality is given for the Hardy space function. Here, we will give a short proof of the main local inequality using only the estimates for hyperbolic measure of super-level sets of $u$. By the bathtub principle, we have:
$$\int_{\Omega}u(z)d\mu(z)\leqslant \int_{u(z)>t}u(z)d\mu(z) $$
for $\mu(\Omega)=\mu(\{z \in \mathbb{D}: u(z)>t\}).$
From
\begin{align*}
&\int_{u(z)>t}u(z)d\mu(z)=\int_{u(z)>t}\bigg(\int_{0}^{u(z)}d\tau\bigg)d\mu(z)\\
&=\int_{0}^{T}\bigg(\int_{u(z)>\max\{t,\tau\}}d\mu(z)\bigg)d\tau\\
&=\int_{0}^{t}\bigg(\int_{u(z)>t}d\mu(z)\bigg)d\tau+\int_{t}^{T}\bigg(\int_{u(z)>\tau}d\mu(z)\bigg)d\tau=t\rho(t)+\int_{t}^{T}\rho(\tau)d\tau.
\end{align*}
Let us denote $\Phi(t):=t\rho(t)+\int_{t}^{T}\rho(\tau)d\tau-\pi\log\big(1+\frac{\rho(t)}{\pi}\big), \rho(t)=\mu(\{z\in \mathbb{D}: u(z)>t\}).$ Differentiating, we get: $\Phi'(t)=\rho'(t)\big(t-\frac{\pi}{\pi+\rho(t)}\big),$ and, since $\rho'(t)<0$ and $\rho(t)\leqslant \pi\big(\frac{1}{t}-1\big),$ we get that $\Phi$ is increasing and, by $\Phi(T)=0,$ therefore $\Phi(t)\leqslant 0$, i.e.:
$$\int_{u(z)>t}u(z)d\mu(z)\leqslant \pi\log\bigg(1+\frac{\rho(t)}{\pi}\bigg).$$

In order to prove the stability estimate, we appeal to the following refined inequality for the distribution function of $u:$
$$\rho(t)\leqslant \pi\bigg(1+\frac{C(1-T)}{t_0^3T}\bigg)\bigg(\frac{T}{t}-1\bigg)=\tilde{\rho}(t),$$
for some $C>0$ and $\tau\in [t_1, T],$ where $t_1=T(1-\frac{t_0^3}{C})$ is the specific value defined such that $\tilde{\rho}(t)\leqslant \rho_0(t)$ for $t\in (t_1, T).$ 
 
We start from the estimate 
\begin{align*}
I&=\int_{0}^{1} G'(\tau)(\rho_0(\tau)-\rho(\tau))d\tau\\
&\geqslant \pi\int_{t_1}^{T}G'(\tau)\bigg(\frac{1}{\tau}-1-\bigg(1+\frac{C(1-T)}{t_0^3T}\bigg)\bigg(\frac{T}{\tau}-1\bigg)\bigg)d\tau\\
&=\pi(1-T)\int_{t_1}^{T}\frac{G'(\tau)}{\tau}\bigg(1-\frac{C}{t_0^3T}\big(T-\tau\big)\bigg)d\tau=c_G(1-T).
\end{align*}
Here $t_1=T(1-\frac{t_0^3}{C})$ is the lower estimate for $t$ for which the expression inside the brackets is positive. This estimate holds for $T\geqslant T_0.$ 

For small values of $T,$ we use the following chain of inequalities 
\begin{align*}
&\int_{T}^{1}G'(\tau)(\rho_0(\tau)-\rho(\tau))d\tau=\int_{T}^{1}G'(\tau)\rho_0(\tau)d\tau\\
&=\pi\int_{T}^{1}G'(\tau)\bigg(\frac{1}{\tau}-1\bigg)d\tau\geqslant \pi \int_{T}^{1-\gamma(1-T)}G'(\tau)\bigg(\frac{1}{\tau}-1\bigg)d\tau\\
&\geqslant\pi\bigg(\frac{1}{1-\gamma(1-T)}-1\bigg)\int_{T}^{1-\gamma(1-T)}G'(\tau)d\tau\\
&=\pi\frac{\gamma(1-T)}{1-\gamma(1-T)}\bigg(G\big(1-\gamma(1-T)\big)-G(T)\bigg).
\end{align*}
Choosing $\gamma$ such that $1-\gamma(1-T)=\frac{1+T_0}{2},$ we get
\begin{align*}
&\int_{T}^{1}G'(\tau)(\rho_0(\tau)-\rho(\tau))d\tau\geqslant \pi\frac{1-T_0}{1+T_0}\bigg(G\bigg(\frac{1+T_0}{2}\bigg)-G(T)\bigg)\\
&\geqslant \pi\frac{1-T_0}{1+T_0}\bigg(G\bigg(\frac{1+T_0}{2}\bigg)-G(T_0)\bigg)\geqslant\pi\frac{1-T_0}{1+T_0}\bigg(G\bigg(\frac{1+T_0}{2}\bigg)-G(T_0)\bigg)(1-T).
\end{align*}\\
Therefore, in any case:
$$I=\int_{0}^{1}G'(t)(\rho_0(t)-\rho(t))dt\geqslant c_G'(1-T)$$
for a strictly increasing function $G.$ The reader can easily see that proving result for a general non-strictly increasing function strongly depends on the availability of better estimates for $\rho(t),$ therefore the following questions naturally arise: Does there exist a strictly decreasing function $h,$ such that for every $f \in H^2(\mathbb{D})$ with $\sup_{z \in \mathbb{D}}|f(z)|^2(1-|z|^2)\leqslant T\|f\|_{H^2}, T<1$ there holds $\rho(t)\leqslant h(t,T)?$ Can it be explicitly found? 
\begin{remark}
One can note that it is possible to give the multidimensional Hardy space analog of Theorem 4. Namely, the appropriate inequality for any increasing function $G$ will hold if and only if $\rho(t)\leqslant \rho_0(t)$ (as defined in Section 5), and the Lemma \ref{slabaocena} for $\alpha=-1$ will infer the result. 
\end{remark}


\section{Acknowledgements} The author is partially supported by MPNTR grant no. 174017.\\


\bibliographystyle{amsalpha}

\end{document}